\documentclass[12pt,reqno]{amsart}
\usepackage{amssymb,amsmath,tabularx,mathrsfs,mathbbol,yfonts,upgreek,hyperref}
\usepackage{amsthm,verbatim,comment}
\usepackage{geometry}
\geometry{top=3.5cm, left=3cm, right=3cm, bottom=3cm}
\usepackage{stmaryrd}
\usepackage{paralist}
\usepackage[all]{xy}
\usepackage{mathdots}
\usepackage{tikz}
\usepackage{ytableau}
\usetikzlibrary{arrows,matrix,positioning,fit}
\usepackage{graphicx}
\allowdisplaybreaks
\usepackage{nicematrix}
\setcounter{MaxMatrixCols}{20}
\NiceMatrixOptions{
code-for-first-row = \color{red} ,
code-for-last-row = \color{red} ,
code-for-first-col = \color{blue} ,
code-for-last-col = \color{red}
}

\DeclareSymbolFontAlphabet{\mathbb}{AMSb}
\DeclareSymbolFontAlphabet{\mathbbl}{bbold}

\newtheorem{thm}{Theorem}[section]
\newtheorem{lem}[thm]{Lemma}
\newtheorem{prop}[thm]{Proposition}
\newtheorem{cor}[thm]{Corollary}

\theoremstyle{definition}
\newtheorem{defn}[thm]{Definition}

\newtheorem{eg}[thm]{Example}
\newtheorem{rem}[thm]{Remark}
\theoremstyle{remark}
\newtheorem*{rem*}{Remarks}

\renewcommand{\mod}{\mathrm{mod}\ }

\newcommand{\sgn}{\mathrm{sgn}}

\newcommand{\sym}[1]{\mathfrak{S}_{#1}}
\newcommand{\e}[1]{\overline{e_{#1}}}
\renewcommand{\t}[1]{\overline{\mathfrak{t}_{#1}}}

\newcommand{\A}{\mathbb{A}}

\newcommand{\N}{\mathbb{N}}

\newcommand{\B}{\mathcal{B}}

\newcommand{\F}{\mathbb{F}}
\newcommand{\U}[2]{{\mathrm{U}^\wedge_{{#1}}({#2})}}
\newcommand{\ind}[2]{{#1}{\uparrow^{#2}}}
\newcommand{\res}[2]{{#1}{\downarrow_{#2}}}
\newcommand{\rad}{\mathrm{Rad}}
\DeclareMathOperator{\rank}{rank}

\numberwithin{equation}{section}
\newcommand{\rk}[2]{V^\#_{#1}({#2})}
\newcommand{\Nom}{\mathrm{N}}

\begin{document}
\title[The rank varieties]{Small modules with interesting rank varieties}

\author{Kay Jin Lim}
\address[K. J. Lim]{Division of Mathematical Sciences, Nanyang Technological University, SPMS-MAS-05-16, 21 Nanyang Link, Singapore 637371.}
\email{limkj@ntu.edu.sg}

\author{Jialin Wang}
\address[J. Wang]{Division of Mathematical Sciences, Nanyang Technological University, SPMS-MAS-04-15, 21 Nanyang Link, Singapore 637371.}
\email{wang1483@e.ntu.edu.sg}

\begin{abstract} This paper focuses on the rank varieties for modules over a group algebra $\F E$ where $E$ is an elementary abelian $p$-group and $p$ is the characteristic of an algebraically closed field $\F$. In the first part, we give a sufficient condition for a Green vertex of an indecomposable module to contain an elementary abelian $p$-group $E$ in terms of the rank variety of the module restricted to $E$. In the second part, given a homogeneous algebraic variety $V$, we explore the problem on finding a small module with rank variety $V$. In particular, we examine the simple module $D^{(kp-p+1,1^{p-1})}$ for the symmetric group $\sym{kp}$.
\end{abstract}

\subjclass[2010]{20C20, 20C30, 14J25}
\thanks{We thank the referee for numerous suggestion especially for a shorter proof of Lemma \ref{L: rkinduction}. The first author is supported by Singapore Ministry of Education AcRF Tier 1 grant RG17/20.}

\maketitle

\section{Introduction}

Let $\F$ be an algebraically closed field with positive characteristic $p$. Most group algebras of a finite $p$-group in characteristic $p$ have wild representation theory. As such, one does not hope to classify the indecomposable modules up to isomorphism. Instead, various techniques have been brought in to study the representations without the intention of making such classification. In this paper, we mainly focus on the complexity \cite{AE81,AE82}, Green vertex \cite{Green59} and rank variety \cite{carlson} for modules. These notions are interrelated. For example, if $E$ is an elementary abelian $p$-subgroup of rank $k$ of a finite group $G$ and $M$ is an indecomposable $\F G$-module with the rank variety of $\res{M}{E}$ the same as $\F^k$, then $E$ is a subgroup of a Green vertex of $M$. The first result in our paper improves such relation (see Theorem \ref{T: Green}).

In the second part of this paper, we study the exterior power of the natural simple $\F\sym{n}$-module $D^{(n-1,1)}$. Our study is motivated by various sources. For simplicity, for each homogeneous algebraic variety $V$, in \cite{Carlson84}, Carlson constructed a module with rank variety $V$. The dimension of the module obtained in this manner is `pretty large'. In \cite[\S12.7]{benson2016}, with the help of the $L_\zeta$-technology, Benson constructed an $\F E$-module of dimension 36 with rank variety \[p_3=x_1^2x_2^2+x_1^2x_3^2+x_2^2x_3^2\] when $p=3$ and $E=(C_3)^3$. This is smaller than the Specht module $S^{(3^3)}$ (which has dimension 42) with the same rank variety upon restriction to the elementary abelian $3$-subgroup generated by 3 disjoint 3-cycles (see \cite{L09}). In \cite[Appendix B, Problem 16]{benson2016}, for a homogeneous algebraic variety $V$, Benson asked for the smallest dimension $d_V$ of an $\F E$-module with rank variety $V$. The problem is obviously extremely difficult to answer in general. Nevertheless, we wish to offer some insight to this problem from the viewpoint of the representations of symmetric groups. More precisely, we want to find naturally occurring representations from the symmetric groups with interesting varieties. Apart from, more generally, the Specht module $S^{(p^p)}$, the other examples are the simple module $D^{(n-1,1)}$ in $p=2$ (see \cite{Jiang21}) and some basic spin modules (see \cite{Uno}).  In \cite[Theorem 4.9]{LT13}, Tan and the first author proved that all simple modules $D^\lambda$ for the symmetric groups belonging to the weight 2 block have complexity 2 except $\lambda=(p+1,1^{p-1})$. This suggested the study of the simple module $D(p-1):=D^{(kp-p+1,1^{p-1})}\cong \bigwedge^{p-1}D^{(kp-1,1)}$ for $k\geq 2$. Notice that, when $p\nmid n$, for all $1\leq r\leq n-1$, $\bigwedge^rD^{(n-1,1)}\cong \bigwedge^rS^{(n-1,1)}\cong S^{(n-r,1^r)}$ which is a simple Specht module. As such, the variety of $\bigwedge^rD^{(n-1,1)}$ is `uninteresting' as shown in \cite{DL17}. When $p\mid n$ and $1\leq r<p-1$, we have $p\nmid \dim_\F D^{(n-r,1^r)}$ and its variety is again `uninteresting'. As such, the module $D(p-1)$ is the `smallest' we could have picked.

Suppose that $k\not \equiv 1(\mod p)$ {and $p$ is odd}. In this paper, we prove that $D(p-1)$ has complexity $k-1$. More precisely, we show that the rank variety of $D(p-1)$ restricted to the largest elementary abelian $p$-group $E_k$ is the algebraic set $V(p_k)$ where $p_k$ is given as in Equation \ref{Eq: pk}. We consider the module is rather small in the following sense. When $p=3$ and $k\geq 3$, Carlson's result (see Theorem \ref{T: dimension}) shows that any module with the rank variety $V(p_k)$ has dimension at least $6(k-1)$ and divisible by $3$. On the other hand, $\dim D(2)={3k-2\choose 2}=\frac{3(k-1)(3k-2)}{2}$. Its ratio with Carlson's bound is \[\frac{\dim D(2)}{6(k-1)}=\frac{3k-2}{4}\] which is a polynomial of degree 1 in $k$. The particular case when $k=3$, the module $M:=\res{D^{(7,1,1)}}{E_3}$ has the rank variety $V(p_3)$ as above and has dimension 21 which is strictly smaller than the dimensions of the modules we have discussed in the previous paragraph, which are 36 and 42. Using Magma \cite{Magma}, it turns out also that $M$ is indecomposable and we have not found a strictly smaller module than $M$ and yet supporting $p_3$.

The methods we have employed in our computation include the notion of generic and maximal Jordan types of modules, their relation with the Schur functor and brute-force calculation. We believe that the condition $k\not \equiv 1(\mod p)$ is unnecessary and further conjecture that $\bigwedge^rD^{(kp-1,1)}$ has interesting variety when $r\equiv p-1(\mod p)$.

In the next section, we collate the basic knowledge we shall need in this paper. In Section \ref{S: Green Vertex}, {we prove our result Theorem \ref{T: Green} regarding the rank variety and Green vertex}. In Section \ref{S: D1}, we compute the generic Jordan type and the maximal Jordan set of the module $\res{D(1)}{E_k}$ (see Theorem \ref{T: jtD}) as a preparation for the discussion in the next section. In the final section, we show that the rank variety of $\res{D(p-1)}{E_k}$ is the algebraic set $V(p_k)$ as in Theorem \ref{T: main thm}.

\section{Preliminary}

For the basic knowledge required in this article, we refer the reader to the references \cite{Benson1,Benson2,benson2016,James78,Shafarevich}. Throughout, $\F$ is an algebraically closed field of positive characteristic $p$.

\subsection{Modules for finite groups} Let $G$ be a finite group. All the $\F G$-modules we consider in this paper are finite-dimensional over $\F$. We use the notations $\ind{}{}$ and $\res{}{}$ for the induction and restriction of modules for finite groups.  The direct sum and tensor product of two $\F G$-modules $M,N$ are denoted by $M\oplus N$ and $M\otimes_\F N$ (or simply $M\otimes N$) respectively. By abuse of notation, the trivial $\F G$-module is also denoted as $\F$ (or $\F_G$ if we wish to emphasis the group $G$). If $N$ is a direct summand of $M$, that is $M\cong N\oplus N'$ for another $\F G$-module $N'$, we write $N\mid M$.

Let $M$ be an indecomposable $\F G$-module. A Green vertex $Q$ of $M$ is a minimal subgroup of $G$ such that $M\mid \ind{S}{G}$ for some indecomposable $\F Q$-module $S$. In this case, $S$ is called an $\F Q$-source of $M$ and $Q$ is necessarily a $p$-subgroup.  Furthermore, all green vertices of $M$ are conjugate in $G$.

Let $H,K$ be subgroups of $G$ and $N$ be an $\F K$-module. We have the Mackey's formula \[\res{\ind{N}{G}}{H}\cong \bigoplus_{HgK} \ind{\res{{}^gN}{H\cap {}^gK}}{H}\] where the sum runs over a complete set of double coset representatives of $(H,K)$ in $G$.

For a finite group $G$ and an $\F G$-module $M$, the complexity of $M$ is the rate of growth of a projective resolution of $M$ and is denoted as $c_G(M)$. Moreover, if $\mathscr{E}$ is a set of representatives of the maximal elementary abelian $p$-subgroups of $G$ up to conjugation, we have
\begin{equation}\label{Eq: complexity}
c_G(M)=\max_{E\in \mathscr{E}}c_E(\res{M}{E}).
\end{equation}

\subsection{The representations of symmetric groups}\label{SS: sym} For a finite set $A$, we denote $\sym{A}$ the symmetric group permuting the elements in $A$. For a natural number $n$, we let \[\sym{n}=\sym{\{1,2,\ldots,n\}}.\] Let $k$ be a positive integer and $n=kp$. In this paper, we are interested in modules for the symmetric group $\sym{kp}$ restricted to the elementary abelian $p$-subgroup \[E_k=\langle g_1,\ldots,g_k\rangle\] where, for each $1\leq i\leq k$, $g_i$ is the $p$-cycle $((i-1)p+1,(i-1)p+2,\ldots,ip)$. Notice that, when $p$ is odd, $E_k$ is, up to conjugation, the only elementary abelian $p$-subgroup of $\sym{kp}$ with rank $k$ and the remaining have ranks strictly less than $k$. Notice that \[\mathrm{N}_{\sym{kp}}(E_k)/\mathrm{C}_{\sym{kp}}(E_k)\cong \F_p^\times\wr \sym{k}.\]

A partition $\lambda$ of $n$ is a sequence of positive integers $(\lambda_1,\dots,\lambda_k)$ such that $\lambda_1\geq \lambda_2\geq \cdots\geq\lambda_k$ and $\lambda_1+\cdots+\lambda_k=n$. It is a hook partition if $\lambda_2=\cdots=\lambda_k=1$. It is $p$-regular if $\lambda$ does not contain $p$ parts of the same size. The Young diagram of $\lambda$ is the depiction of the set $\{(i,j): 1\leq i\leq k, 1\leq j\leq \lambda_i\}$ and an element in the set (or diagram) is called a node. A $\lambda$-tableau is an array obtained by assigning the nodes in the Young diagram of $\lambda$ by the numbers $1,2,\dots,n$ with no repeats. We say that a $\lambda$-tableau is standard if the numbers are increasing both in each row from left to right and in each column from top to bottom. Given another partition $\mu$ of $n$, $\lambda$ dominates $\mu$ if, for all $r$, \[\sum^r_{i=1}\lambda_i\geq \sum^r_{i=1}\mu_i\] where we let $\lambda_i=0$ if $i>k$ and similar for $\mu$. In this case, we write $\lambda\unrhd \mu$.

Fix a partition $\lambda$ of $n$. The symmetric group $\sym{n}$ has a natural action on the $\lambda$-tableaux by permuting the numbers. The row (respectively, column) stabilizers $R_t$ (respectively, $C_t$) of a $\lambda$-tableau $t$ is the subgroup of $\sym n$ consisting of the elements fixing the rows (respectively, columns) of $t$ setwise. A $\lambda$-tabloid $\{t\}$ is the equivalence class containing $t$ under the equivalence relation defined by: for $\lambda$-tableaux $t,t'$, $t\sim t'$ if and only if $t=\pi t'$ for some $\pi\in R_t$. As such, the natural action of $\sym n$ acts on the $\lambda$-tabloids by permuting the numbers as well.

Let $t$ be a $\lambda$-tableau. We define the polytabloid \[e_t=\sum_{\sigma \in C_t} (\sgn\sigma)\sigma\{t\}.\] We say that $e_t$ is standard if $t$ is standard. The Specht module $S^\lambda$ is the $\F\sym n$-module which is, as a vector space, spanned by the $\lambda$-polytabloids. The set of standard $\lambda$-polytabloids forms a characteristic free basis for $S^\lambda$ and its dimension is given by the hook formula. In the case $\lambda$ is $p$-regular, $S^\lambda$ has a simple head $D^\lambda$. Moreover, the set of $D^\lambda$ where $\lambda$ runs over all $p$-regular partitions of $n$ gives a complete set of simple $\F\sym n$-modules up to isomorphism.

Let $n>2$ and consider the natural simple $\F\sym{n}$-module $D(1):=D^{(n-1,1)}$. For any $r\leq \dim_\F(D(1))$, define the $r$th exterior power \[D(r):=\bigwedge^rD(1).\] In fact, the surjection $S^{(n-1,1)}\twoheadrightarrow D(1)$ induces a surjection $S^{(n-r,1^r)}\cong \bigwedge^r S^{(n-1,1)}\twoheadrightarrow D(r)$ where the isomorphism can be found in, for example, \cite[Proposition 2.3(a)]{MZ07}. In the case when $p$ is odd, we have that $D(r)$ is again a simple $\F\sym{n}$-module (see \cite{Danz07}). In this case, using \cite[6.3.59]{JK} and \cite{Peel}, we have $D(r)\cong D^{(n-r,1^r)^R}$ where $R$ denotes the $p$-regularisation of a partition; in particular, \[D(p-1)\cong D^{(n-p+1,1^{p-1})}.\]

Suppose that $n=kp$ and $p$ is an arbitrary prime. In this case, $S^{(kp-1,1)}$ has composition factors $D^{(kp-1,1)}$ and $D^{(kp)}\cong \F$ from top to bottom. For each $i=1,\ldots,kp$, let $\mathfrak{t}_i$ denote the $(kp-1,1)$-tabloid with $i$ in the second row and, if $i\neq 1$, let $e_i=e_{\mathfrak{t}_i}=\mathfrak{t}_i-\mathfrak{t}_1$. For $\sigma \in \sym {kp}$, we have $\sigma e_i=\mathfrak{t}_{\sigma i}-\mathfrak{t}_{\sigma 1}$. As such, $\{e_i:i=2,\ldots,kp\}$ is a basis for $S^{(kp-1,1)}$ with the trivial submodule $D^{(kp)}$ spanned by $\sum^{i=kp}_{i=2}e_i$. Let $\e i=e_i+D^{(kp)}$. Thus we obtain \[\{ \e i:  i=3,\ldots,kp\}\] a basis for $D(1)=D^{(kp-1,1)}$. In this case, for $1\leq r\leq kp-2$, \[\dim_\F D(r)=\binom{kp-2}{r}.\] In particular, $p\nmid \dim_\F D(r)$ for all $r=1,\ldots,p-2$.


\subsection{Algebraic variety}

Consider the polynomial ring $R:=\F[x_1,\ldots,x_k]$. For any ideal $I$ of $R$, let $V(I)$ denote the algebraic set. The affine space $\A^k(\F)$ is a noetherian topological space with the algebraic sets as the closed sets. For the algebraic variety $V(I)$, we let $\dim V(I)$ denote its dimension, that is the supremum of the lengths $d$ of the chains of distinct irreducible closed subsets of $V(I)$ of the form $V_d\subset \cdots\subset V_1\subset V_0$. Furthermore, by definition, $\dim V(I)$ is equal to the maximum of the dimensions of the irreducible components of $V(I)$.  In the case when $I$ is prime, i.e., $V(I)$ is irreducible, $\dim V(I)$ is the Krull dimension of $R/I$. We shall need the following.

\begin{thm}[{\cite[Chapter 1, \S6, Theorem 1]{Shafarevich}}]\label{T: irred variety dim} Suppose that $V$ is irreducible and $W\subseteq V$. If $\dim W=\dim V$ then $W=V$.
\end{thm}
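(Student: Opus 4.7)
The plan is to argue by contradiction via the chain-length definition of dimension recalled just above the statement. Assuming (as is implicit in the context of algebraic sets) that $W$ is closed in $\A^k(\F)$, suppose for contradiction that $W \subsetneq V$. I will exhibit a chain of distinct irreducible closed subsets of $V$ of length $\dim V + 1$, violating the definition of $\dim V$.

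First, using the definition of $\dim W$, I would fix a chain
\[W_d \subsetneq W_{d-1} \subsetneq \cdots \subsetneq W_0\]
of distinct irreducible closed subsets of $W$ with $d = \dim W = \dim V$; since this common value is a finite integer, the supremum in the definition of dimension is realized. Each $W_i$ is also an irreducible closed subset of $V$, and the key observation is that $W_0 \subseteq W \subsetneq V$, so $W_0$ is strictly smaller than $V$. Prepending $V$ itself, which is irreducible, I then obtain
\[V \supsetneq W_0 \supsetneq W_1 \supsetneq \cdots \supsetneq W_d,\]
a chain of distinct irreducible closed subsets of $V$ of length $d+1 = \dim V + 1$. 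This contradicts the definition of $\dim V$ as the supremum of such lengths, so $W = V$ as claimed.

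The proof is essentially pure chain bookkeeping and I do not anticipate any real obstacle. The only point to watch is that both hypotheses get used in the right place: the irreducibility of $V$ is what lets $V$ serve as the top of an irreducible chain, while the equality $\dim W = \dim V$ is what forces the extended chain to be too long to fit inside $V$.
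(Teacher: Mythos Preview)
Your argument is correct and is the standard proof of this fact. Note, however, that the paper does not actually prove this statement: it is quoted from Shafarevich \cite[Chapter~1, \S6, Theorem~1]{Shafarevich} and stated without proof, so there is no ``paper's own proof'' to compare against. Your chain-extension argument is precisely the one found in Shafarevich and most textbooks, and your caveat about needing $W$ to be closed is well taken---in the paper's applications $W$ is always a rank variety, hence a Zariski-closed homogeneous subvariety.
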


Suppose that $k$ is an integer at least $2$. Throughout the paper, we denote
\begin{align}\label{Eq: pk}
p_k:=\sum_{i=1}^{k}\left (x_1\cdots \widehat{x_i}\cdots x_k\right )^{p-1}
\end{align} where $\widehat{x_i}$ indicates that $x_i$ does not appear as a factor in the corresponding summand. By convention, we denote $p_1=1$. When $k=2$, we have \[p_2=x_1^{p-1}+x_2^{p-1}=\prod_{\lambda\in I}(x_1-\lambda x_2)\] where $I$ is the complete set of $(p-1)$th roots of $-1$ in $\F$.  However, when $k\geq 3$, the polynomial $p_k$ is irreducible as given below.

\begin{lem}\label{irred}
If $k\geq 3$, the polynomial $p_k$ is irreducible and the variety $V(p_k)$ is irreducible of dimension $k-1$.
\end{lem}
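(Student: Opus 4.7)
The plan is to prove that $p_k$ is irreducible in $\F[x_1,\dots,x_k]$ by induction on $k \geq 3$. Once this is established, the principal ideal $(p_k)$ is prime in the UFD $\F[x_1,\dots,x_k]$, so $V(p_k)$ is irreducible; and as the vanishing locus of a single nonzero polynomial, $V(p_k)$ is a hypersurface of dimension $k-1$.

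The key recursion is
\[
p_k \;=\; p_{k-1}(x_1,\dots,x_{k-1})\cdot x_k^{p-1} \;+\; (x_1\cdots x_{k-1})^{p-1},
\]
obtained by separating the single summand of $p_k$ that omits $x_k$ from the remaining $k-1$ summands (each contributing an $x_k^{p-1}$). I view $p_k$ as an element of $R[x_k]$ with $R := \F[x_1,\dots,x_{k-1}]$. Substituting $x_i = 0$ in $p_{k-1}$ leaves the nonzero monomial $\prod_{l \neq i} x_l^{p-1}$, so no $x_i$ divides $p_{k-1}$, whence $\gcd\bigl(p_{k-1},(x_1\cdots x_{k-1})^{p-1}\bigr)=1$ and $p_k$ is primitive in $R[x_k]$. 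By Gauss's lemma, irreducibility of $p_k$ in $R[x_k]$ reduces to irreducibility in $K[x_k]$ for $K := \F(x_1,\dots,x_{k-1})$. Setting $A := (x_1\cdots x_{k-1})^{p-1}/p_{k-1}$ I have $p_k = p_{k-1}\cdot(x_k^{p-1} + A)$ over $K$, and since $p_{k-1}$ is a unit in $K$, the problem further reduces to the irreducibility of $x_k^{p-1} + A$ over $K$.

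For this I invoke Capelli's theorem: $x_k^{p-1} - (-A)$ is irreducible over $K$ provided $-A$ is not a $q$-th power in $K$ for any prime $q \mid p-1$, and, when $4 \mid p-1$, also $A \neq 4b^4$ for any $b \in K$. Both conditions follow from the fact that in lowest terms the denominator of $-A$ is exactly the \emph{squarefree} polynomial $p_{k-1}$: an irreducible factor of $p_{k-1}$ appearing with multiplicity one cannot arise as a denominator in any proper power $h^q$ or in $4b^4$. Squarefreeness of $p_{k-1}$ is verified in the two cases of the induction: for $k=3$, the polynomial $p_2 = \prod_{\lambda \in I}(x_1 - \lambda x_2)$ is a product of $p-1$ distinct linear factors; for $k \geq 4$, the induction hypothesis gives the stronger statement that $p_{k-1}$ is itself irreducible.

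The main obstacle I anticipate is the careful invocation of Capelli's theorem, especially the $4 \mid p-1$ supplementary condition, though the squarefreeness argument dispatches both Capelli conditions uniformly. No complication arises from positive characteristic, since $\gcd(p-1,p) = 1$ implies that every prime $q \mid p-1$ is distinct from $p$ and Frobenius plays no role in the $q$-th power analysis.
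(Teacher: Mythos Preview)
Your proof is correct and takes a genuinely different route from the paper's. Both arguments exploit the same recursion $p_k = p_{k-1}\,x_k^{p-1} + (x_1\cdots x_{k-1})^{p-1}$ and proceed by induction on $k$, but diverge in how they establish irreducibility. The paper treats the base case $k=3$ by an ad hoc substitution argument (dehomogenising and evaluating at roots of $1+Y^{p-1}$), and for $k\geq 4$ works in the quotient domain $D=\F[x_1,\dots,x_{k-1}]/(p_{k-1})$, showing by degree-counting in $D[x_k]$ that any factorisation $p_k=gh$ forces one factor to be constant. You instead pass to $K(x_k)$ via Gauss's lemma and invoke the Vahlen--Capelli criterion for $x_k^{p-1}+A$, reducing both Capelli conditions to the single observation that the denominator $p_{k-1}$ of $A$ is squarefree. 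This has the pleasant feature of handling $k=3$ and $k\geq 4$ uniformly (squarefreeness of $p_2$ versus irreducibility of $p_{k-1}$), at the cost of importing Capelli's theorem; the paper's argument is more self-contained but requires a separate and somewhat delicate base case.
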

\begin{proof}
We argue by induction on $k$. When $k=3$, $p_3=x^{p-1}y^{p-1}+x^{p-1}z^{p-1}+y^{p-1}z^{p-1}$.
Suppose that $p_3=gh$ for some $g,h \in \F[x,y,z]$. Since $p_3$ is homogeneous, both $g,h$ are homogeneous, say of degrees $r,s$ respectively. Divide both sides throughout by $z^{2p-2}$. Let $X=\frac{x}{z}$, $Y=\frac{y}{z}$, $P_3(X,Y)=\frac{1}{z^{2p-2}}p_3$, $G(X,Y)=\frac{1}{z^{r}}g$ and $H(X,Y)=\frac{1}{z^s}h$. We have
\begin{align*}
G(X,Y)H(X,Y)=P_3(X,Y)&=X^{p-1}Y^{p-1}+X^{p-1}+Y^{p-1}\\
&=X^{p-1}(1+Y^{p-1})+Y^{p-1}.
\end{align*}
Let $G(X,Y)=a_0X^0+a_1X^1+\ldots+a_iX^i$ and $H(X,Y)=b_0X^0+b_1X^1+\ldots+b_{p-1-i}X^{p-1-i}$ such that all the coefficients $a_0,\ldots,a_i,b_0,\ldots,b_{p-1-i}$ are in $\F[Y]$ and, without loss of generality, $0\leq i\leq p-1-i$. In particular, we have $a_ib_{p-1-i}=1+Y^{p-1}$. Since $\F$ is algebraically closed, $1+Y^{p-1}$ has $p-1$ distinct roots $\lambda_1,\lambda_2,\ldots, \lambda_{p-1}$ in $\F$ and \[1+Y^{p-1}=\prod_{j=1}^{p-1} (Y-\lambda_j).\] Thus, for each $1\leq j\leq p-1$, $Y-\lambda_j$ is either a factor of $a_i$ or a factor of $b_{p-1-i}$ (and not both). As such, we must have either $a_i(\lambda_j)\neq 0$ or $b_{p-1-i}(\lambda_j)\neq 0$.  On the other hand, for each $1\leq j\leq p-1$, $P_3(X,\lambda_j)=\lambda_j^{p-1}=-1\in \F$. This implies
\[\deg(P_3(X,\lambda_j))=\deg(G(X,\lambda_j))+\deg(H(X,\lambda_j))=0,\]
i.e., $\deg(G(X,\lambda_j))=\deg(H(X,\lambda_j))=0$ for any $\lambda_j$. If $Y-\lambda_j$ is a factor of $a_i$ then $b_{p-1-i}(\lambda_j)\neq 0$ and $\deg(H(X,\lambda_j))=p-1-i=0$. So $i=p-1>p-1-i=0$, a contradiction. This shows that $b_{p-1-i}=\frac{1}{\mu}(1+Y^{p-1})$ and $a_i=\mu\neq 0$ in $\F$. But $0=\deg(G(X,\lambda_j))=i$. Hence $G(X,Y)=\mu\neq 0$. In particular, $g(x,y,z)=\mu z^r$. But $z\nmid p_3$. Therefore $r=0$, i.e., $g=\mu\in \F$. Thus, $p_3$ is irreducible.

Suppose now that $p_{k-1}(x_1,x_2,\ldots, x_{k-1})\in \F[x_1,x_2,\ldots, x_{k-1}]$ is irreducible for some $k\geq 4$. Let
\[D=\F[x_1,x_2,\ldots, x_{k-1}]/(p_{k-1})\] which is an integral domain. For a polynomial $f\in \F[x_1,\ldots,x_k]$, let $f=f_0x_k^0+\cdots+f_mx_k^m$ where $f_0,\ldots,f_m\in \F[x_1,x_2,\ldots, x_{k-1}]$, we write \[\bar{f}=\overline{f_0}x_k^0+\cdots+\overline{f_m}x_k^m\in D[x_k]\] where $\overline{f_i}=f_i+(p_{k-1})\in D$. Suppose that $p_k=gh$ for some $g,h\in \F[x_1,x_2,\ldots, x_k]$. Since $p_k$ is homogeneous, both $g,h$ are homogeneous. Notice that $\overline{g}\overline{h}=\overline{p_k}=\overline{(x_1x_2\cdots x_{k-1})^{p-1}}$ and so \[\deg(\bar{g})+\deg(\bar{h})=\deg(\overline{p_k})=0.\] As such, $\deg(\bar g)=0=\deg(\bar h)$. Thus, there exist $s,s'\in \F[x_1,x_2,\ldots, x_k]$ and $t,t'\in \F[x_1,x_2,\ldots, x_{k-1}]$ such that
\begin{align*}
g&=p_{k-1}s+t,\\
h&=p_{k-1}s'+t',
\end{align*}
where $t=\frac{1}{\mu}x_1^{i_1}x_2^{i_2}\cdots x_{k-1}^{i_{k-1}}$ and $t'=\mu x_1^{p-1-i_1}x_2^{p-1-i_2}\cdots x_{k-1}^{p-1-i_{k-1}}$ for some integers $0\leq i_1,i_2,\ldots,i_{k-1}\leq p-1$ and $0\neq\mu\in \F$. We claim that $\deg(g)=\deg(p_{k-1})+\deg(s)$ if $s\neq 0$. Let $d=\deg(s)$ and $s_d$ be the $d$th homogeneous component of $s$. Then \[p_{k-1}s_d+t=\frac{1}{\mu}x_1^{i_1}x_2^{i_2}\cdots x_{k-1}^{i_{k-1}}+\sum_{i=1}^{k-1}x_1^{p-1}\cdots \widehat{x_i^{p-1}}\cdots x_{k-1}^{p-1}s_d\neq 0\] and therefore $\deg(g)=\deg(p_{k-1})+\deg(s_d)=\deg(p_{k-1})+d$ as required. If $ss'\neq 0$, let $d'=\deg(s')$, we get \[(k-1)(p-1)=\deg(p_k)=\deg(gh)=2\deg(p_{k-1})+d+d'\geq 2(k-2)(p-1),\] i.e., $3\geq k$, a contradiction. Without loss of generality, let us assume $s'=0$ (and hence $s\neq 0$). Then $g=p_{k-1}s+t$ and $h=t'\in \F[x_1,x_2,\ldots, x_{k-1}]$. On the other hand,
\[x_k^{p-1}p_{k-1}+(x_1x_2\cdots x_{k-1})^{p-1}=p_{k-1}st'+tt',
\]
i.e., $st'=x_k^{p-1}$. So $h=t'=\mu$ and $p_k$ is irreducible. Since $p_k$ is irreducible, $(p_k)$ is prime and the variety $V(p_k)$ is irreducible of dimension $k-1$.
\end{proof}

\subsection{Rank variety for a module}

We now review the rank variety for a module as introduced by Carlson \cite{carlson}.  Let $E=\langle g_1,\ldots,g_k\rangle$ be an elementary abelian $p$-group of rank $k$ with the generators in the order $g_1,\ldots,g_k$. For each $i=1,\ldots,k$,  denote $X_i=g_i-1\in \F E$. The set $\{X_1+J^2,\dots,X_k+J^2\}$ forms a basis for $J/J^2$ where $J=\rad(\F E)$ is the Jacobson radical of $\F E$. Let $\A^k(\F)$ denote the affine $k$-space over $\F$ consisting of $k$-tuples $\alpha=(\alpha_1,\dots,\alpha_k)$ with each $\alpha_i\in \F$. For $0\neq \alpha=(\alpha_1,\dots,\alpha_k)\in\A^k(\F)$ and let \[X_\alpha=\alpha_1X_1+\cdots+\alpha_kX_k\in \F E\] and $u_\alpha=1+X_\alpha$. Since we are in characteristic $p$, we have $X_\alpha^p=0$ and $u_\alpha^p=1$, that is $\langle u_\alpha\rangle$ is a cyclic subgroup of $(\F E)^\times$ of order $p$ and it is called a cyclic shifted subgroup of $E$.

Let $M$ be an $\F E$-module.  We denote the consideration of $M$ as $\F\langle u_\alpha\rangle$-module by $\res{M}{\langle u_\alpha\rangle}$.  The rank variety $\rk{E}{M}$ of $M$ is defined as
\[\rk{E}{M}=\{0\}\cup\{0\neq \alpha\in \A^k(\F): \text{$\res{M}{\langle u_\alpha\rangle}$ is not free}\}.\]
{Up to isomorphism, the rank variety $V^\#_E(M)$ is independent of the choice of generators for $E$ (see \cite[Theorem 6.5]{carlson}).} By slight abuse of notation, for an $\F G$-module $M$ and an elementary abelian $p$-subgroup $E$ of $G$, we write $V^\#_E(M)$ for $V^\#_E(\res{M}{E})$. The rank variety of a projective $\F E$-module is described as follows.

\begin{lem}[Dade's lemma {\cite[Lemma 11.8]{Dade78}}]\label{L: Dade} Let $M$ be an $\F E$-module. Then $M$ is projective if and only if $V^\#_E(M)=\{0\}$.
\end{lem}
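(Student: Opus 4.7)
I would prove the two directions separately. The forward implication is a direct computation in the truncated polynomial algebra $\F E$; the converse is an induction on the rank $k$ of $E$, where the inductive step is the technical heart.

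For ($\Rightarrow$), since $\F E$ is local, projectivity of $M$ is equivalent to freeness, so it suffices to show that $\F E$ itself is free as an $\F\langle u_\alpha\rangle$-module. Writing $\F E\cong\F[X_1,\ldots,X_k]/(X_1^p,\ldots,X_k^p)$ and reindexing so that $\alpha_1\neq 0$, the elements $X_\alpha,X_2,\ldots,X_k$ still span $J/J^2$. Since $E$ is abelian and $\Char(\F)=p$, we have $X_\alpha^p=\sum_i \alpha_i^p X_i^p=0$, so the obvious algebra map $\F[T_1,\ldots,T_k]/(T_i^p)\to \F E$ sending $T_1\mapsto X_\alpha$ and $T_i\mapsto X_i$ for $i\geq 2$ is a surjection between $p^k$-dimensional algebras and hence an isomorphism. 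This exhibits $\F E$ as $\F\langle u_\alpha\rangle\otimes_\F \F[T_2,\ldots,T_k]/(T_i^p)$, a free $\F\langle u_\alpha\rangle$-module, and thus $\res{M}{\langle u_\alpha\rangle}$ is free whenever $M$ is.

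For ($\Leftarrow$), I would induct on $k$. The base case $k=1$ is immediate: taking $\alpha=1$ gives $\langle u_\alpha\rangle=\langle g_1\rangle=E$, so the hypothesis already says $M$ is free. For $k\geq 2$, set $E'=\langle g_1,\ldots,g_{k-1}\rangle$. For any $0\neq\alpha'\in\A^{k-1}(\F)$, padding by a zero $k$-th coordinate produces exactly the same cyclic shifted subgroup inside $\F E'$ as inside $\F E$, so the hypothesis restricts to $\rk{E'}{\res{M}{E'}}=\{0\}$; by induction $\res{M}{E'}\cong (\F E')^n$ is free. The commuting generator $g_k$ then acts on $M$ as an $\F E'$-linear automorphism $A$ of order dividing $p$, and the task becomes to use the remaining hypothesis (freeness on shifted subgroups $\langle u_\alpha\rangle$ with $\alpha_k\neq 0$) to conclude that $M$ is free over $\F E$.

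This last step is the main obstacle. The cleanest approach is a rank criterion: $M$ is free over $\F E$ iff $\rank(X_1^{p-1}\cdots X_k^{p-1}|_M)=\dim_\F M/p^k$, and $\res{M}{\langle u_\alpha\rangle}$ is free iff $\rank(X_\alpha^{p-1}|_M)=\dim_\F M/p$. The map $\alpha\mapsto\rank(X_\alpha^{p-1}|_M)$ is constructible on $\A^k(\F)\setminus\{0\}$, and the hypothesis pins it to its generic maximum everywhere. Combined with the $\F E'$-freeness already in hand, one would specialize at $\alpha=(\alpha',c)$ with $c\neq 0$ generic to force the product $X_1^{p-1}\cdots X_k^{p-1}$ to attain its maximum rank $\dim_\F M/p^k$, which forces $M$ to be free over $\F E$. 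Carrying out this rank computation, tracking how the Jordan structure of $X_\alpha$ interacts with the fixed $\F E'$-free structure, is where the real work lies.
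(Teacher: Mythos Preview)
The paper does not prove this statement. Lemma~\ref{L: Dade} is stated with a citation to \cite[Lemma 11.8]{Dade78} and used as a black box; there is no proof in the paper to compare your attempt against.

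As for your plan on its own merits: the forward direction is correct and is the standard argument. Your inductive setup for the converse is also the classical one, and your reduction to a rank criterion is the right idea. But you yourself flag that the final step---deducing $\rank(X_1^{p-1}\cdots X_k^{p-1}|_M)=\dim_\F M/p^k$ from freeness over $\F E'$ together with freeness on all shifted subgroups with $\alpha_k\neq 0$---is ``where the real work lies,'' and you do not carry it out. That step is genuinely nontrivial: one typically writes $M\cong(\F E')^n$, views $X_k$ as an $\F E'$-linear nilpotent endomorphism, and then either passes to a transcendental extension (Dade's original route) or uses a careful rank/dimension-count argument (as in Carlson's or Benson's treatments) to force the top product to have the correct rank. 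Your outline is sound, but as written it is a plan rather than a proof; the omitted step is exactly the content of Dade's lemma.
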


Given a short exact sequence $0\to M'\to M\to M''\to 0$ of $\F E$-modules, by definition, we have $V^\#_E(M)\subseteq V^\#_E(M')\cup V^\#_E(M'')$. As a consequence, we get the following lemma.

\begin{lem}\label{L: filtra} Let $M$ be an $\F E$-module and consider a filtration of $M$ with subquotients $Q_1,\ldots,Q_r$. Then \[V^\#_E(M)\subseteq \bigcup_{i=1}^r V^\#_E(Q_i).\]
\end{lem}

Moreover, we have the following basic properties regarding rank varieties.

\begin{thm}\label{T: basic rank}
Let $M$ and $N$ be $\F E$-modules. Then
\begin{enumerate}[(i)]
\item $\rk{E}{M}$ is a closed homogeneous subvariety of $\rk{E}{\F}=\A^k(\F)$,
\item the dimension of $\rk{E}{M}$ is equal to the complexity $c_E(M)$,
\item $\rk{E}{M\oplus N}=\rk{E}{M}\cup\rk{E}{N}$ and $\rk{E}{M\otimes N}=\rk{E}{M}\cap\rk{E}{N}$,
\item if $M$ is indecomposable, then the projective variety $\overline{V^\#_E(M)}$ is connected,
\end{enumerate}
\end{thm}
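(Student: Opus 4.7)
My plan is to prove the four parts separately, since they rely on different techniques.

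For (i), I would reformulate freeness as a rank condition: $\res{M}{\langle u_\alpha\rangle}$ is free if and only if the Jordan canonical form of $X_\alpha$ on $M$ consists only of blocks of size $p$, equivalently $\rank(X_\alpha^{p-1}) = (\dim_\F M)/p$. Fixing a basis of $M$, the entries of $X_\alpha^{p-1}$ are polynomials of degree $p-1$ in $\alpha_1,\ldots,\alpha_k$, so the non-freeness locus is the simultaneous vanishing of all sufficiently large minors of this matrix, hence Zariski closed in $\A^k(\F)$. Homogeneity is immediate from $X_{\lambda\alpha}=\lambda X_\alpha$ for $\lambda\in\F^\times$, which has the same rank as $X_\alpha$.

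For (iii), the direct sum statement is immediate because $\res{(M\oplus N)}{\langle u_\alpha\rangle}$ is free if and only if both restrictions are. For the tensor product, I would invoke the classical fact about $\F C_p$-modules: $M\otimes N$ is free if and only if at least one of $M$ or $N$ is free. The forward direction is the standard isomorphism $\F C_p\otimes N\cong(\F C_p)^{\dim_\F N}$ as $\F C_p$-modules under the diagonal action (via $g\otimes n\mapsto g\otimes g^{-1}n$); for the converse, if $M$ and $N$ each contain a Jordan block $J_a,J_b$ of size strictly less than $p$, then $J_a\otimes J_b$ is a direct summand of $M\otimes N$ of dimension $ab$ coprime to $p$, which therefore cannot be free. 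This yields the intersection formula.

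For (ii) and (iv), I would invoke the deeper cohomological machinery. For (ii), the plan is to use the Avrunin--Scott theorem identifying $V^\#_E(M)$ with the cohomological support variety $V_E(M)\subseteq\mathrm{Spec}\,H^*(E,\F)$, combined with Carlson's theorem that $\dim V_E(M)=c_E(M)$; the latter follows from finite generation of $\mathrm{Ext}^*_{\F E}(M,M)$ over $H^*(E,\F)$ and the standard link between Krull dimension of the cohomological annihilator and polynomial growth of a minimal projective resolution. For (iv), I would argue by contradiction: a projective disconnection $\overline{V^\#_E(M)}=V_1\sqcup V_2$ with both $V_i$ non-empty would, via $L_\zeta$-type modules for cohomology classes carving out the two components, produce a nontrivial idempotent on $M$ in the stable module category, contradicting indecomposability.

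The hard parts are clearly (ii) and (iv): (i) and (iii) reduce to elementary rank-variety manipulations, whereas (ii) and (iv) rest on the substantial machinery of the Avrunin--Scott comparison and Carlson's $L_\zeta$-construction, and in practice would be cited from \cite{benson2016, carlson} rather than reproved in full.
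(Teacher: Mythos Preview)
The paper does not actually prove this theorem; it is stated in the Preliminary section as a list of standard properties of rank varieties, with the surrounding references \cite{Benson1,Benson2,benson2016,carlson} serving as the source. So there is nothing to compare your argument against.

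That said, your sketch is correct and matches the standard literature proofs: (i) via the rank condition on $X_\alpha^{p-1}$, (iii) via the elementary $\F C_p$-module fact on freeness of tensor products, (ii) via the Avrunin--Scott identification of $V^\#_E(M)$ with the cohomological support variety together with Carlson's complexity formula, and (iv) via Carlson's connectedness theorem using $L_\zeta$-modules. Your assessment that (ii) and (iv) are the substantive parts, and would in practice be cited rather than reproved, is exactly how the paper treats the whole theorem.
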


Obviously, when $p\nmid \dim_\F M$, $\res{M}{\langle u_\alpha\rangle}$ is never free and therefore $V^\#_E(M)=\A^k(\F)$. The following theorem demonstrates a more subtle relation.

\begin{thm}[{\cite[Corollary 7.7]{carlson} and \cite[Theorem 3.1]{Carlson93}}]\label{T: dimension}
Suppose that $E$ has rank $k$, $M$ is an $\F E$-module, $r,d$ are the dimension and degree of $V^\#_E(M)$ respectively. We have $p^{k-r}\mid \dim_\F M$ and $dp^{k-r}\leq \dim_\F M$.
\end{thm}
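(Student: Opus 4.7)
The plan for the divisibility $p^{k-r}\mid\dim_\F M$ is to exhibit a shifted elementary abelian subgroup $F\subseteq(\F E)^\times$ of rank $k-r$ on which $M$ is free; the claim then follows from $|F|=p^{k-r}$. Since $V^\#_E(M)$ has affine dimension $r$, a dimension count in $\mathbb{P}^{k-1}$ shows that a generic linear subspace $L\subseteq\A^k(\F)$ of dimension $k-r$ satisfies $L\cap V^\#_E(M)=\{0\}$. Picking a basis $\alpha_1,\ldots,\alpha_{k-r}$ of $L$ and setting $F=\langle u_{\alpha_1},\ldots,u_{\alpha_{k-r}}\rangle$ yields a subgroup of $(\F E)^\times$ isomorphic to $(\Z/p)^{k-r}$. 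Using the $\alpha_i$ as coordinates on $F$, one identifies $V^\#_F(\res{M}{F})$ with $L\cap V^\#_E(M)=\{0\}$, and Dade's lemma (Lemma \ref{L: Dade}) applied to $F$ forces $\res{M}{F}$ to be projective, hence free over $\F F$. Therefore $p^{k-r}\mid\dim_\F M$.

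For the inequality $dp^{k-r}\leq\dim_\F M$, I would extend $L$ to a generic linear subspace $L'$ of dimension $k-r+1$ and study the corresponding shifted subgroup $F'\supset F$ of rank $k-r+1$. Iterated generic hyperplane sections preserve the degree (Bezout), so $L'\cap V^\#_E(M)$ is one-dimensional of degree $d$, i.e., a union of lines through the origin in $L'$ whose scheme-theoretic total multiplicity is $d$. Writing $N=\res{M}{F'}$, the first part applied to $F\subset F'$ gives $N\cong(\F F)^{\oplus m}$ as $\F F$-modules with $m=(\dim_\F M)/p^{k-r}$, so it suffices to prove $m\geq d$.

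To bound $m$ from below, pick $\beta\in L'\setminus L$ and parametrise the cyclic shifted subgroups of $F'$ not contained in $F$ by scalars $t\in\F$ via the elements $u_{\alpha_0+t\beta}$, where $\alpha_0\in L$ is fixed. The key step is to construct a polynomial $\phi(t)\in\F[t]$ of degree at most $m$ whose zero scheme coincides with the set of $t$ for which the line through $\alpha_0+t\beta$ lies in $V^\#_E(M)$. A natural candidate is an appropriate maximal minor of the matrix $(X_{\alpha_0}+tX_\beta)^{p-1}$ after passing to the $m$-dimensional quotient $N/J(\F F)N$: its entries are polynomial in $t$ of controlled degree, and its vanishing records exactly the rank drop of $X_{\alpha_0+t\beta}^{p-1}$, equivalently non-freeness.

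The main obstacle is to match the total multiplicity of zeros of $\phi$ with the scheme-theoretic degree of $L'\cap V^\#_E(M)$, which can be non-reduced. This requires formulating the degree of the rank variety directly through the Fitting ideal of maximal minors of $(x_1X_1+\cdots+x_kX_k)^{p-1}$ acting on $M$, and showing compatibility of this Fitting/degree description with restriction to the pencil determined by $L$ and $\beta$. Once the identification is pinned down, the bound $m\geq\deg\phi\geq d$ (roots counted with multiplicity) completes the argument, along the lines of \cite{Carlson93}.
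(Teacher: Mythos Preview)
The paper does not prove this result; it is quoted from \cite{carlson} and \cite{Carlson93} and used as a black box, so there is no in-paper proof to compare against. Your argument for $p^{k-r}\mid\dim_\F M$ is correct and is precisely Carlson's: a generic $(k-r)$-dimensional linear subspace $L\subseteq\A^k(\F)$ meets $V^\#_E(M)$ only at $0$, the shifted subgroup $F=\langle u_{\alpha_1},\dots,u_{\alpha_{k-r}}\rangle$ has $V^\#_F(\res{M}{F})\cong L\cap V^\#_E(M)=\{0\}$ since $1+\sum_i\gamma_i(u_{\alpha_i}-1)=u_{\sum_i\gamma_i\alpha_i}$, and Dade's lemma finishes.

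For the degree bound your reduction to a shifted subgroup $F'$ of rank $k-r+1$ is also the right move, but the construction of $\phi(t)$ on $N/\rad(\F F)N$ does not work: since $\alpha_0\in L$, the element $X_{\alpha_0}$ lies in $\rad(\F F)$ and acts as $0$ on that quotient, so $(X_{\alpha_0}+tX_\beta)^{p-1}$ acts there as $t^{p-1}X_\beta^{p-1}$ and every minor you extract is a power of $t$ times a constant, detecting nothing about the individual lines. Computing minors on all of $N$ instead gives polynomials of degree of order $(p-1)\dim_\F M$, so the bound $m\ge\deg\phi$ is lost. A cleaner finish avoids $\phi$ entirely. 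For generic $L'$ the projectivised rank variety, being reduced of degree $d$, meets $\overline{L'}\cong\mathbb{P}^{k-r}$ in $d$ distinct points, so $V^\#_{F'}(N)=\ell_1\cup\cdots\cup\ell_d$ with the $\ell_i$ distinct lines through the origin. Decompose $N=\bigoplus_j N_j$ into indecomposable $\F F'$-modules; by Theorem~\ref{T: basic rank}(iii) we have $\bigcup_j V^\#_{F'}(N_j)=\ell_1\cup\cdots\cup\ell_d$, and by Theorem~\ref{T: basic rank}(iv) each $\overline{V^\#_{F'}(N_j)}$ is a connected subset of the finite set $\{\bar\ell_1,\dots,\bar\ell_d\}$, hence empty or a single point. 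Thus each $\ell_i$ equals $V^\#_{F'}(N_{j_i})$ for pairwise distinct indices $j_i$; applying the divisibility already proved to $N_{j_i}$ over $F'$ (rank $k-r+1$, variety of dimension $1$) gives $p^{k-r}\mid\dim_\F N_{j_i}$, whence $\dim_\F M=\dim_\F N\ge\sum_{i=1}^d\dim_\F N_{j_i}\ge dp^{k-r}$.
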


Following \cite[Appendix B, Problem 16]{benson2016}, we define the following.

\begin{defn} Let $V$ be a homogeneous algebraic variety in $\A^k(\F)$. Define \[d_V=\min\{\dim_\F M:V^\#_E(M)\cong V\}\] where $E$ is an elementary abelian $p$-group of rank $k$ and $M$ is finite-dimensional over $\F$.
\end{defn}

The set in the definition is not empty by \cite{Carlson84} and Theorem \ref{T: dimension} gives a lower bound for $d_V$. {In the rank 1 case, the description of $d_V$ is simple. If $\dim V=0$ then $d_V=p$. If $\dim V=1$ then $d_V=1$. To address the rank 2 case, we need the following lemma. }

\begin{lem}\label{L: linear space} Let $E$ be an elementary abelian $p$-group of rank $k$ and $W$ be a linear subspace of $\F^k$ of dimension $r$. There is an $\F E$-module $M$ of dimension $p^{k-r}$ such that $V^\#_E(M)=W$. In particular, we have $d_W=p^{k-r}$.
\end{lem}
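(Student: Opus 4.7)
The plan is to construct $M$ explicitly as a cyclic quotient of $\F E$. Identify $\F E$ with $\F[X_1,\ldots,X_k]/(X_1^p,\ldots,X_k^p)$ via $X_i=g_i-1$. Given $W\subseteq \F^k$ of dimension $r$, I would pick an $\F$-basis $w_1,\ldots,w_k$ of $\F^k$ with $W=\mathrm{span}_\F(w_1,\ldots,w_r)$, and set $Y_i=\sum_{j=1}^{k}w_{ij}X_j$ for $i=1,\ldots,k$. The $Y_i$ pairwise commute, each satisfies $Y_i^p=0$ (since we are in characteristic $p$), and they generate $\F E$ because $(w_{ij})\in\GL_k(\F)$; a dimension count then yields an $\F$-algebra isomorphism $\F E\cong \F[Y_1,\ldots,Y_k]/(Y_1^p,\ldots,Y_k^p)$. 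I would then define
\[
M=\F E\big/(Y_1,\ldots,Y_r)\;\cong\;\F[Y_{r+1},\ldots,Y_k]/(Y_{r+1}^p,\ldots,Y_k^p),
\]
so that $\dim_\F M=p^{k-r}$.

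Next I would compute $V^\#_E(M)$ directly. For $\alpha=\sum_{i=1}^{k}c_i w_i\in\F^k$, the element $X_\alpha=\sum_{i=1}^{k}c_iY_i$ acts on $M$ as $\sum_{j>r}c_jY_j$, since $Y_1,\ldots,Y_r$ annihilate $M$. If $\alpha\in W$ then all $c_j$ with $j>r$ vanish, so $X_\alpha$ acts as zero and $\res{M}{\langle u_\alpha\rangle}$ is a direct sum of trivial modules and therefore not free, giving $\alpha\in V^\#_E(M)$. If $\alpha\notin W$ then $(c_{r+1},\ldots,c_k)\neq 0$, and I can complete $\sum_{j>r}c_jY_j$ to an $\F$-basis of $\mathrm{span}_\F(Y_{r+1},\ldots,Y_k)$ and repeat the same automorphism argument to realise $M$ as a tensor product of $k-r$ copies of $\F[y]/(y^p)$, one of whose factors is $\F[X_\alpha]/(X_\alpha^p)\cong \F\langle u_\alpha\rangle$. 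Hence $M$ is free over $\F\langle u_\alpha\rangle$ and $\alpha\notin V^\#_E(M)$. This gives $V^\#_E(M)=W$.

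Finally, Theorem~\ref{T: dimension} applied to any $\F E$-module $N$ with $V^\#_E(N)=W$ (which has dimension $r$) yields $p^{k-r}\mid \dim_\F N$, so $d_W\geq p^{k-r}$; since the module $M$ above attains this bound, I conclude $d_W=p^{k-r}$.

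The argument is largely mechanical; the one conceptual point worth flagging is that an arbitrary $\F$-linear change of basis of $\mathrm{span}_\F(X_1,\ldots,X_k)$ can be realised by an $\F$-algebra automorphism of $\F E$, even though a change of generators of $E$ itself would only produce an $\F_p$-linear change. It is precisely this freedom that lets an arbitrary linear subspace $W$ be treated on the same footing as a coordinate subspace and that allows the bound of Theorem~\ref{T: dimension} to be attained exactly.
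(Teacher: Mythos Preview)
Your argument is correct and is essentially what the paper does: it cites \cite[Example 12.1.2]{benson2016} for the existence of such a module and then invokes Theorem~\ref{T: dimension} for the matching lower bound; you have simply unpacked Benson's construction explicitly as the quotient $\F E/(Y_1,\ldots,Y_r)$ and verified its rank variety by hand. The point you flag about the $\F$-linear (not merely $\F_p$-linear) change of coordinates inducing an algebra automorphism of $\F E$ is exactly the mechanism behind Benson's example, so the two approaches coincide.
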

\begin{proof} The existence of such module has been shown in \cite[Example 12.1.2]{benson2016}. As such, $d_W$ is bounded above by $p^{k-r}$. Together with Theorem \ref{T: dimension}, we have the equality.
\end{proof}

As such, we get the following result for $d_V$ in the rank 2 case.

{\begin{thm} Suppose that $k=2$. Let $V$ be a homogeneous algebraic variety in $\A^2(\F)$ of dimension $r$. Then \[d_V=\left \{\begin{array}{ll}1&\text{if $r=2$,}\\ dp&\text{if $r=1$ and $d=\deg(\overline{V})$,}\\ p^2&\text{if $r=0$.}\end{array}\right .\]
\end{thm}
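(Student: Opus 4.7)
The plan is to split into the three cases and in each apply what has already been assembled in this section. The cases $r=0$ and $r=2$ are routine, and the main work is in $r=1$.

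For $r=2$, the variety $V$ must equal $\A^2(\F)$ by Theorem \ref{T: irred variety dim} applied to the irreducible ambient space, and the trivial module $M=\F$ satisfies $V^\#_E(\F)=\A^2(\F)$, so $d_V=1$. For $r=0$, Dade's lemma (Lemma \ref{L: Dade}) tells us $V^\#_E(M)=\{0\}$ precisely when $M$ is projective, and the regular module $\F E$ achieves $\dim_\F M=p^2$; the matching lower bound $p^2\mid\dim_\F M$ is the first assertion of Theorem \ref{T: dimension} with $k-r=2$.

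For the main case $r=1$, I would first use that $V$, being a $1$-dimensional closed homogeneous subvariety of $\A^2(\F)$, is cut out by a single reduced homogeneous polynomial $f\in\F[x_1,x_2]$ whose degree coincides with $d=\deg(\overline{V})$. Since $\F$ is algebraically closed and $f$ is homogeneous in two variables, $f$ factors as a product of $d$ pairwise non-proportional linear forms, so $V=L_1\cup\cdots\cup L_d$ is a union of $d$ distinct lines through the origin. Lemma \ref{L: linear space} then supplies, for each $L_i$, an $\F E$-module $M_i$ of dimension $p$ with $V^\#_E(M_i)=L_i$; setting $M=\bigoplus_{i=1}^d M_i$ and invoking Theorem \ref{T: basic rank}(iii) gives $V^\#_E(M)=V$ and $\dim_\F M=dp$, so $d_V\leq dp$. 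The matching lower bound $d_V\geq dp$ is the inequality $dp^{k-r}\leq\dim_\F M$ from Theorem \ref{T: dimension}.

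No step is genuinely difficult: the whole argument is an assembly of Dade's lemma, Theorem \ref{T: dimension}, Theorem \ref{T: basic rank}(iii), and Lemma \ref{L: linear space}. The only minor point warranting explicit verification is that the reduced defining polynomial of $V$ in the $r=1$ case splits into $d$ distinct linear factors, which is immediate from the algebraic closure of $\F$ and from $\F[x_1,x_2]$ being a unique factorization domain (so that the radical ideal of a $1$-dimensional homogeneous subvariety is principally generated by a product of distinct linear forms).
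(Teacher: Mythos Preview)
Your proof is correct and follows essentially the same approach as the paper's: both treat the three cases separately, using the trivial module for $r=2$, the regular module together with Theorem~\ref{T: dimension} (or Dade's lemma) for $r=0$, and for $r=1$ decomposing $V$ into $d$ lines, applying Lemma~\ref{L: linear space} to each, taking the direct sum via Theorem~\ref{T: basic rank}(iii), and matching the lower bound with Theorem~\ref{T: dimension}. Your write-up is slightly more explicit about why $V$ splits into $d$ distinct lines and about the lower bound in the $r=0$ case, but the substance is the same.
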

\begin{proof} Suppose that $r=2$. We have $V=\A^2(\F)$ and $d_V=1$. Suppose that $r=1$ and $d=\deg(\overline{V})$. We have that $\overline{V}$ is a union of $d$ distinct points $v_1,\ldots,v_d$. For each $1\leq i\leq d$, by Lemma \ref{L: linear space}, there is an $\F E$-module $M_i$ of dimension $p$ such that $V^\#_E(M_i)$ is the line passing through $v_i$. By Theorem \ref{T: basic rank}(iii), $V^\#_E(\bigoplus_{i=1}^d M_i)=V$. As such, $d_V$ is bounded above by $dp$. Using Theorem \ref{T: dimension}, we have $d_V=dp$. Suppose now that $r=0$. We get $V^\#_E(\F E)=\{0\}=V$. Therefore $d_V=p^2$.
\end{proof}}

Let $G$ be a finite group, $E$ be an elementary abelian $p$-subgroup of $G$ and $M$ be an $\F G$-module. There is a natural action of $\Nom_G(E)$ on $V^{\#}_E(M)$ given by the following. If $n \in \Nom_G(E)$ such that $ng_in^{-1} = \prod_{j=1}^k g_j^{a_{ij}}$ for each $i$, and $\alpha = (\alpha_1,\dotsc, \alpha_k) \in \A^k(\F)$, then \[n \cdot \alpha = \left (\sum_{j=1}^k a_{1j}\alpha_j, \dotsc, \sum_{j=1}^k a_{kj}\alpha_j\right ).\]

In this paper, we are interested in the case when $G=\sym{kp}$ and $E_k=\langle g_1,\ldots,g_k\rangle$ where $g_i$'s are the $p$-cycles as in \S\ref{SS: sym}. In this case, the action on the rank variety can be translated as follows.

\begin{lem}\label{L: symmetry} Let $M$ be an $\F\sym{kp}$-module.  We have $\mathrm{N}_{\sym{kp}}(E_k)/\mathrm{C}_{\sym{kp}}(E_k)\cong \F_p^\times\wr \sym{k}$ and, for $\gamma\in \F_p^\times$ in the $i$th component, $\sigma\in\sym{k}$ and $\alpha\in V^\#_{E_k}(M)$,
\begin{align*}
  \gamma\cdot \alpha&=(\alpha_1,\ldots,\gamma\alpha_i,\ldots,\alpha_k),\\
  \sigma\cdot \alpha&=(\alpha_{\sigma^{-1}(1)},\alpha_{\sigma^{-1}(2)},\ldots,\alpha_{\sigma^{-1}(k)}).
\end{align*}
\end{lem}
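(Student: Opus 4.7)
The plan is to first pin down the abstract isomorphism $\mathrm{N}_{\sym{kp}}(E_k)/\mathrm{C}_{\sym{kp}}(E_k)\cong \F_p^\times\wr \sym{k}$ by a direct orbit-counting argument, and then translate the general action formula stated in the paragraph preceding the lemma to each of the two generating families of the wreath product.

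For the group-theoretic part, I would let $B_i=\{(i-1)p+1,\ldots,ip\}$ denote the support of $g_i$. Since the $B_i$ are pairwise disjoint and each $g_i$ is a $p$-cycle on $B_i$, an element commuting with every $g_i$ must preserve each $B_i$ setwise and act there as a power of $g_i$; hence $\mathrm{C}_{\sym{kp}}(E_k)=E_k$. For any $n\in \mathrm{N}_{\sym{kp}}(E_k)$, the conjugate $ng_in^{-1}$ is a single $p$-cycle lying in $E_k$, and the only $p$-cycles in $E_k$ are the $g_j^a$ with $a\in \F_p^\times$; thus there exist unique $\tau(i)\in\{1,\ldots,k\}$ and $a_i\in \F_p^\times$ with $ng_in^{-1}=g_{\tau(i)}^{a_i}$, and $\tau$ is a permutation because conjugation induces an automorphism of $E_k$. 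The map $n\mapsto (a_1,\ldots,a_k;\tau)$ is then a homomorphism to $\F_p^\times\wr \sym{k}$ with kernel exactly $E_k$, and surjectivity is witnessed by explicit lifts: for the $\sym{k}$-factor, take the permutation sending $B_i$ to $B_{\sigma(i)}$ preserving internal order; for the $i$-th $\F_p^\times$-factor with value $\gamma$, take the permutation $(i-1)p+j\mapsto (i-1)p+(\gamma^{-1}(j-1)\!\!\pmod p)+1$ on $B_i$ and the identity elsewhere.

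With the isomorphism in hand, I would verify each of the two formulas by plugging into the definition $n\cdot\alpha=(\sum_j a_{lj}\alpha_j)_l$ where $ng_ln^{-1}=\prod_j g_j^{a_{lj}}$. For the diagonal element $\gamma$ in the $i$-th slot, the chosen lift satisfies $ng_in^{-1}=g_i^\gamma$ and $ng_ln^{-1}=g_l$ for $l\neq i$, so $a_{ij}=\gamma\delta_{ij}$ and $a_{lj}=\delta_{lj}$ for $l\neq i$; substituting gives $(n\cdot\alpha)_i=\gamma\alpha_i$ and $(n\cdot\alpha)_l=\alpha_l$ otherwise, matching the first formula. For $\sigma\in \sym{k}$, a lift with $ng_in^{-1}=g_{\sigma^{-1}(i)}$ yields $a_{lj}=\delta_{j,\sigma^{-1}(l)}$, whence $(n\cdot\alpha)_l=\alpha_{\sigma^{-1}(l)}$, matching the second formula.

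The only real obstacle is a convention issue: one must choose the lift for $\sigma$ so that $n\mapsto\sigma$ is a homomorphism giving a left action (hence the $\sigma^{-1}$ in the second formula), rather than the opposite convention that produces a right action. Once this is fixed, the verification is purely a matter of unpacking the matrix $(a_{lj})$ in each case.
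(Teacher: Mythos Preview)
The paper states this lemma without proof; it is treated as a routine translation of the general $\mathrm{N}_G(E)$-action defined in the preceding paragraph, together with the (already noted in \S\ref{SS: sym}) identification $\mathrm{N}_{\sym{kp}}(E_k)/\mathrm{C}_{\sym{kp}}(E_k)\cong \F_p^\times\wr\sym{k}$. So there is nothing in the paper to compare against, and your proposal supplies exactly the kind of argument the authors are tacitly assuming.

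Your argument is correct in outline and essentially complete. Two small points are worth tightening. First, the explicit lift you give for $\gamma$ in the $i$th component has an inverse slip: the map $(i-1)p+j\mapsto (i-1)p+(\gamma^{-1}(j-1)\bmod p)+1$ actually conjugates $g_i$ to $g_i^{\gamma^{-1}}$, not $g_i^{\gamma}$; replacing $\gamma^{-1}$ by $\gamma$ in the formula fixes this. Second, you use two different lifts for $\sigma$: in the surjectivity step you take the block permutation sending $B_i$ to $B_{\sigma(i)}$ (which yields $ng_in^{-1}=g_{\sigma(i)}$), while in the verification you take one with $ng_in^{-1}=g_{\sigma^{-1}(i)}$. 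You correctly flag this as a convention issue, but it would be cleaner to commit to a single identification from the start: the isomorphism that makes the lemma's second formula come out right is the one sending $nC$ to $\sigma$ when $n$ permutes the blocks via $B_i\mapsto B_{\sigma^{-1}(i)}$ (equivalently, when the induced automorphism of $E_k\cong\F_p^k$ is the permutation matrix $P_\sigma$ with $P_\sigma e_j=e_{\sigma(j)}$). With that fixed, the matrix $(a_{lj})$ is precisely $P_\sigma$ and $(n\cdot\alpha)_l=\alpha_{\sigma^{-1}(l)}$ falls out immediately.
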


\subsection{Maximal and generic Jordan types}
The representations of the cyclic group of order $p$, {denoted by $C_p$}, is well-known. Up to isomorphism, for $\F C_p$, there is a unique simple module and the indecomposable modules are $J_1,J_2,\ldots,J_p$ where $\dim_\F J_i=i$, $J_i$'s are uniserial and $J_p$ is projective.  By abuse of notation, we also write $J_i$ for the Jordan block over $\F$ of size $i\times i$ with eigenvalue 0. As such, for $1\leq r\leq i\leq p$, we have
\[\rank(J_i^r)=i-r. \]

Let $E$ be an elementary abelian $p$-group of rank $k$,  $M$ be an $\F E$-module and $B$ be a basis for $M$. For each element $x\in J=\rad(\F E)$, since the matrix representation $[x]_B$ of $x$ with respect to $B$ is nilpotent, we have $[x]_B$ is similar to a diagonal sum of Jordan blocks. Suppose that, for each $1\leq i\leq p$, $J_i$ appears with multiplicity $a_i\in\N_0$. In this case, we say that $x$ has Jordan type \[[p]^{a_p}[p-1]^{a_{p-1}}\cdots[1]^{a_1}.\] The dominance order $\unrhd$ of partitions gives rise to a partial order on the Jordan types, that is $[p]^{a_p}\cdots[1]^{a_1}\succeq [p]^{b_p}\cdots[1]^{b_1}$ if and only if $(p^{a_p},\dots,1^{a_1})\unrhd (p^{b_p},\dots,1^{b_1})$ as partitions.

Let $x\in J$. We say that $x$ has maximal Jordan type on $M$ if the Jordan type of $[x]_B$ is maximal, with respect to the dominance order, among all the matrices of elements in $J$. For the notion of the generic Jordan type of $M$, we refer the reader to \cite[Chapter 3]{benson2016}. Roughly speaking, the generic
Jordan type of $M$ is the Jordan type of $u_\alpha-1$ on $M$ for almost all points $\alpha\in \A^k(\F)$.

Following the work \cite{FPS}, we may regard the set of elements with maximal Jordan type {(denoted as $U_{\max}(M)$ in \cite{benson2016})} as \[\U{E}{M}\subseteq (J/J^2)\backslash\{0\}\] and, as such, subset of $\A^k(\F)$. By slight abuse of notation, for an $\F G$-module $M$ and an elementary abelian $p$-subgroup $E$ of $G$, we also write $\U{E}{M}$ for $\U{E}{\res{M}{E}}$. For our computation purpose in this paper, we require the connection between the maximal and generic Jordan types as below.

\begin{thm}[{\cite[\S4]{FPS}}]\label{T: generic eq maximal} Let $E$ be an elementary abelian $p$-group and $M$ be an $\F E$-module.  All elements with maximal Jordan type on $M$ have the same Jordan type and it is the same as the generic Jordan type of $M$.
\end{thm}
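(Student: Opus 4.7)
The plan is to deduce both assertions from the lower semicontinuity of the rank function on the affine space of elements of $J=\rad(\F E)$. After fixing a basis of $M$, the matrix of $y^i$ acting on $M$ has entries that are polynomial in the coordinates of $y\in J$, so the locus $\{y\in J:\rank(y^i|_M)\geq r\}$ is the complement of the common vanishing set of the $r\times r$ minors and is therefore Zariski open.

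The Jordan type of $y$ acting on $M$ is completely determined by the sequence of numbers $\rank(y^i|_M)$ for $i=1,\ldots,p-1$: if $y$ has type $[p]^{a_p}\cdots[1]^{a_1}$ then $\rank(y^i|_M)=\sum_{j>i}(j-i)a_j$, and conversely the multiplicities $a_j$ are recovered from these ranks. The dominance order on Jordan types translates into termwise inequalities between the associated rank sequences. Consequently, for any Jordan type $\tau_0$, the locus $\{y\in J:\tau(y)\succeq \tau_0\}$ is an intersection of finitely many Zariski open sets in $J$, hence open.

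Next I would restrict attention to the linear parameterization $\alpha\mapsto X_\alpha=\sum_{i=1}^k\alpha_iX_i$, which embeds the irreducible variety $\A^k(\F)$ into $J$. Let $\tau_{\max}$ be any Jordan type attained through this parameterization that is maximal with respect to the dominance order. By maximality, the set $U_{\tau_{\max}}:=\{\alpha\in\A^k(\F):\tau(X_\alpha)\succeq\tau_{\max}\}$ coincides with $\{\alpha:\tau(X_\alpha)=\tau_{\max}\}$, and it is a non-empty Zariski open subset of the irreducible variety $\A^k(\F)$, hence dense. If $\tau_1,\tau_2$ are two maximal attained Jordan types, then by irreducibility $U_{\tau_1}\cap U_{\tau_2}\neq\emptyset$, and any $\alpha$ in this intersection satisfies $\tau(X_\alpha)\succeq\tau_1$ and $\tau(X_\alpha)\succeq\tau_2$; maximality forces $\tau_1=\tau_2$, establishing uniqueness of the maximal Jordan type.

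Finally, density of $U_{\tau_{\max}}$ in $\A^k(\F)$ means that the Jordan type of $u_\alpha-1=X_\alpha$ on $M$ equals $\tau_{\max}$ on a Zariski open dense subset of $\A^k(\F)$, which is exactly the defining property of the generic Jordan type; hence the two coincide. The main subtlety I expect is the bookkeeping between the different ambient parameter spaces: $\U{E}{M}$ is defined inside $(J/J^2)\setminus\{0\}$, the generic Jordan type is defined on $\A^k(\F)$, and the rank computation naturally lives on all of $J$. However, scaling $y$ by a non-zero element of $\F$ does not alter its Jordan type, and the identification $\A^k(\F)\simeq J/J^2$ via the basis $\{X_i+J^2\}$ is linear, so openness, irreducibility and density are preserved across all these transitions, and the semicontinuity argument carries through unchanged.
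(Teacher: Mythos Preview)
The paper does not prove this theorem; it is quoted from \cite{FPS} without argument. Your semicontinuity-plus-irreducibility sketch is indeed the backbone of the argument in \cite{FPS}: the rank functions $y\mapsto\rank(y^i|_M)$ are lower semicontinuous on the irreducible affine space $J$, the dominance order on Jordan types is equivalent to termwise comparison of these rank sequences, and hence the locus of maximal type is open and dense, forcing uniqueness.

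There is, however, one genuine gap. The paper's definition of ``maximal Jordan type'' is taken over \emph{all} of $J$, whereas the generic Jordan type is defined via the linear slice $\{X_\alpha:\alpha\in\A^k(\F)\}\cong J/J^2$. Your argument correctly shows that the maximum over $J$ is unique and that the maximum over the linear slice equals the generic type, but it does not show these two maxima coincide. The linear slice is a $k$-dimensional closed subvariety of the $(p^k-1)$-dimensional space $J$, so density of $U_{\max}$ in $J$ alone does not force $U_{\max}$ to meet the slice. Your final paragraph notices the discrepancy between $J$, $J/J^2$ and $\A^k(\F)$, but the remedy you offer (linearity of $\A^k(\F)\simeq J/J^2$ and scale-invariance) only bridges $\A^k(\F)\leftrightarrow J/J^2$; it says nothing about $J\to J/J^2$. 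The missing ingredient, which is precisely the content of \cite[\S4]{FPS}, is that for $y\in J\setminus J^2$ the Jordan type of $y$ on $M$ depends only on the class $y+J^2$; equivalently, the $\pi$-points $t\mapsto y$ and $t\mapsto X_\alpha$ (with $y\equiv X_\alpha\pmod{J^2}$) are equivalent and give the same Jordan type. With that in hand, the maximum over $J$ is already attained on the linear slice and your argument closes.
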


As a consequence, we have the following corollary.

\begin{cor}\label{C: complement} Suppose that the elementary abelian $p$-group $E$ has rank $k$ and the generic Jordan type of $M$ is free. We have $V^\#_E(M)=\U{E}{M}^c$ which is the complement of $\U{E}{M}$ in $\A^k(\F)$.
\end{cor}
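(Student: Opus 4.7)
The plan is to reduce the statement to a direct combination of Theorem \ref{T: generic eq maximal} and the classification of indecomposable $\F C_p$-modules. The key dictionary is: for any $0 \neq \alpha \in \A^k(\F)$, the element $u_\alpha = 1 + X_\alpha$ generates a cyclic group of order $p$ inside $(\F E)^\times$, and $\res{M}{\langle u_\alpha\rangle}$ is free as an $\F C_p$-module if and only if every Jordan block of $X_\alpha$ acting on $M$ has size $p$, i.e., if and only if the Jordan type of $X_\alpha$ on $M$ is the ``free'' type $[p]^{(\dim_\F M)/p}$.

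First I would observe that the hypothesis ``the generic Jordan type of $M$ is free'' is exactly the statement that the maximal Jordan type (with respect to the dominance order on partitions) is $[p]^{(\dim_\F M)/p}$; indeed, by Theorem \ref{T: generic eq maximal} the generic type coincides with the common Jordan type of all elements in $\U{E}{M}$. In particular, $p \mid \dim_\F M$ and no Jordan type on $M$ can strictly dominate the free one.

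For the inclusion $V^\#_E(M) \subseteq \U{E}{M}^c$, I would argue contrapositively: if $0 \neq \alpha \in \U{E}{M}$, then $X_\alpha$ has the free Jordan type, so $\res{M}{\langle u_\alpha\rangle}$ is free and thus $\alpha \notin V^\#_E(M)$. For the reverse inclusion $\U{E}{M}^c \subseteq V^\#_E(M)$, let $0 \neq \alpha \notin \U{E}{M}$. Then $X_\alpha$ does not have maximal Jordan type, so its Jordan type is strictly less than $[p]^{(\dim_\F M)/p}$ in the dominance order. Consequently, at least one of its Jordan blocks has size $< p$, which forces $\res{M}{\langle u_\alpha\rangle}$ to be non-free, i.e.\ $\alpha \in V^\#_E(M)$. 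The origin lies in both sets by convention, since $\U{E}{M} \subseteq (J/J^2)\setminus\{0\}$ and $0 \in V^\#_E(M)$ by definition.

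There is no real obstacle here: the content is wholly packaged inside Theorem \ref{T: generic eq maximal}, and the only care needed is to handle the base point $\alpha = 0$ consistently and to remember that ``free'' for a $C_p$-representation is detected at the level of Jordan type. The corollary is essentially a restatement of that theorem under the extra hypothesis that the generic type is the top of the dominance lattice of Jordan types of dimension $\dim_\F M$.
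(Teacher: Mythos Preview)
Your proof is correct and follows essentially the same approach as the paper: both invoke Theorem \ref{T: generic eq maximal} to identify the maximal Jordan type as $[p]^m$, and then observe that $\alpha \in V^\#_E(M)$ iff the Jordan type of $X_\alpha$ is strictly below $[p]^m$, iff $\alpha \notin \U{E}{M}$. Your treatment is slightly more detailed (explicitly separating the two inclusions and handling the origin), but the argument is the same.
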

\begin{proof} By Theorem \ref{T: generic eq maximal}, the maximal Jordan type of $M$ is $[p]^m$ for some $m$. For any $\alpha\in \A^k(\F)$, $\res{M}{\langle u_\alpha\rangle}$ is not free if and only if $u_\alpha-1$ has Jordan type strictly less than $[p]^m$ in the dominance order. Therefore, $\alpha\in V^\#_E(M)$ if and only if $\alpha\not\in\U{E}{M}$.
\end{proof}

To conclude our preliminary, we would like to record the following result which is a special case of  \cite[Corollary~4.6.2]{benson2016} as the $r$th exterior power coincides with the Schur functor labelled by $(1^r)$.

\begin{prop}\label{P: Umax}
If $r<p$, the generic Jordan type of $\bigwedge^r(M)$ is $\bigwedge^r$ of the generic Jordan type of $M$, and $\U{E}{\bigwedge^r(M)}\supseteq\U{E}{M}$.
\end{prop}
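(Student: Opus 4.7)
The plan is to reduce the first assertion to a computation on a single cyclic shifted subgroup $\langle u_\alpha\rangle\cong C_p$ for a generic $\alpha$, and then transport the conclusion to the maximal Jordan set via Theorem \ref{T: generic eq maximal}.

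First, let $\lambda=(\lambda_1,\ldots,\lambda_s)$ denote the generic Jordan type of $M$ and pick a Zariski-dense open $U\subseteq\A^k(\F)$ on which $u_\alpha-1$ has Jordan type $\lambda$ on $M$. For $\alpha\in U$ we have an isomorphism of $\F C_p$-modules $\res{M}{\langle u_\alpha\rangle}\cong\bigoplus_i J_{\lambda_i}$, from which the classical identity
\[
\bigwedge^r\Bigl(\bigoplus_i J_{\lambda_i}\Bigr)\cong\bigoplus_{(r_i)} \bigotimes_i \bigwedge^{r_i}(J_{\lambda_i})
\]
(sum over compositions $(r_i)$ of $r$ with each $r_i\leq\lambda_i$) exhibits $\res{\bigwedge^r(M)}{\langle u_\alpha\rangle}$ as a direct sum of tensor products of Jordan blocks. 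The hypothesis $r<p$ forces every $r_i<p$, so each exterior-power factor $\bigwedge^{r_i}(J_{\lambda_i})$ is governed by the classical decomposition of an exterior power of a nilpotent Jordan block (valid because the binomial coefficients that appear when expanding $u_\alpha=1+X_\alpha$ on wedges of a Jordan basis remain nonzero modulo~$p$); tensor products $J_a\otimes J_b$ over $\F C_p$ decompose by the standard formula. The resulting Jordan type depends only on $\lambda$ and $r$ (not on $\alpha$ or even on $M$), so we may denote it $\bigwedge^r\lambda$; since it is achieved on the dense open $U$, it is the generic Jordan type of $\bigwedge^r(M)$, giving the first assertion.

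For the inclusion $\U{E}{\bigwedge^r(M)}\supseteq\U{E}{M}$, take $\alpha\in\U{E}{M}$. By Theorem \ref{T: generic eq maximal} the Jordan type of $u_\alpha-1$ on $M$ already equals the generic type $\lambda$, hence $\res{M}{\langle u_\alpha\rangle}\cong\bigoplus_i J_{\lambda_i}$ and the same $\F C_p$-computation yields Jordan type $\bigwedge^r\lambda$ of $u_\alpha-1$ on $\bigwedge^r(M)$; by the first assertion this is the generic, and hence the maximal, Jordan type of $\bigwedge^r(M)$, so $\alpha\in\U{E}{\bigwedge^r(M)}$. The main technical point is the verification that every step of the block-level computation is characteristic-insensitive under $r<p$, i.e.\ that none of the binomial coefficients appearing in the action of $u_\alpha$ on $\bigwedge^{r_i}(J_{\lambda_i})$ falls below its characteristic-zero value; this is precisely where the bound $r<p$ is needed, and it is the content of \cite[Corollary~4.6.2]{benson2016}.
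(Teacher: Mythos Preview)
Your proposal is correct and aligns with the paper's treatment: the paper does not give an independent proof of this proposition, but simply records it as a special case of \cite[Corollary~4.6.2]{benson2016} (noting that $\bigwedge^r$ is the Schur functor for $(1^r)$), which is exactly the reference you invoke for the key technical step. Your write-up adds a helpful unpacking of the reduction to a single cyclic shifted subgroup and the passage from generic to maximal via Theorem~\ref{T: generic eq maximal}, but the substance is the same citation.

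One small caution: your parenthetical explanation that the characteristic-independence comes from ``binomial coefficients remaining nonzero modulo $p$'' is a heuristic rather than the actual mechanism (the decomposition of $\bigwedge^{r_i}(J_{\lambda_i})$ and of tensor products $J_a\otimes J_b$ over $\F C_p$ involves more than nonvanishing of binomial coefficients). Since you correctly defer the verification to \cite[Corollary~4.6.2]{benson2016} anyway, this does not affect the validity of the argument, but you may wish to soften or remove that clause.
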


\section{A result on the Green vertex}\label{S: Green Vertex}

In this section, we prove a result concerning the Green vertices. In the literature, a sufficient condition for a Green vertex $Q$ of an indecomposable $\F G$-module $M$ to contain an elementary abelian $p$-subgroup $E$ of rank $k$ is $V^\#_E(M)=\F^k$. Our theorem (see Theorem \ref{T: Green}) generalises such result.

We begin with a lemma.

\begin{lem}\label{L: rkinduction}
Let $E=\langle g_1,g_2,\dots,g_k\rangle$ be an elementary abelian $p$-group of rank $k$, $E'=\langle g_1^{w_{11}}\cdots g_k^{w_{1k}},\dots,g_1^{w_{s1}}\cdots g_k^{w_{sk}}\rangle$ be a subgroup of $E$ of rank $s$ where $0\leq w_{ij}\leq p-1$ for all admissible $i,j$ and $M$ be an $\F E'$-module. Then $V_E^\#(\ind{M}{E})$ lies in the subspace of $\A^k(\F)$ given by \[W=\text{span}\{ (w_{11},\dots ,w_{1k}),(w_{21},\dots ,w_{2k}),\dots, (w_{s1},\dots ,w_{sk})\}.\] In particular, if $E'$ acts trivially on $M$, then $V^\#_{E'}(M)=W$.
\end{lem}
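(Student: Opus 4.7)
The plan is to decompose $E$ as a direct product $H\times E'$, identify $\ind{M}{E}\cong \F H\otimes_\F M$, use Carlson's mod-$J^2$ trick \cite[Lemma 6.4]{carlson} to replace $u_\alpha$ by a convenient product of two commuting order-$p$ elements, and then invoke the fact that a free $\F C_p$-module tensored with any $\F C_p$-module is free.

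Since $E$ is elementary abelian, we first extend $\{f_1,\ldots,f_s\}$ to an $\F_p$-basis of $E$ by adjoining $t=k-s$ elements $h_1,\ldots,h_t$ from $\{g_1,\ldots,g_k\}$; with $H=\langle h_1,\ldots,h_t\rangle$ we obtain $E=H\times E'$ and $\ind{M}{E}\cong \F H\otimes_\F M$ as $\F E$-modules, with $H$ acting on the first factor and $E'$ on the second. Letting $\tilde v_j$ be the coordinate vector of $h_j$, we have $\F^k = W\oplus U$ with $U=\mathrm{span}_\F\{\tilde v_1,\ldots,\tilde v_t\}$. For $\alpha\in\A^k(\F)$, decompose $\alpha = \alpha'+\alpha''$ with $\alpha'=\sum_j a_j\tilde w_j\in W$ and $\alpha''=\sum_j b_j\tilde v_j\in U$, and set $y := \sum_j b_j(h_j-1)\in J(\F H)$, $z:=\sum_j a_j(f_j-1)\in J(\F E')$. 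The standard congruence $\prod_i g_i^{c_i}-1\equiv \sum_i c_i(g_i-1)\pmod{J^2}$ for $c_i\in\F_p$ yields $y+z\equiv X_\alpha\pmod{J^2}$. Since $y$ and $z$ are each sums of commuting nilpotents of order $p$, we have $y^p=z^p=0$, so the commuting product $u:=(1+y)(1+z)$ satisfies $u^p=1$ and $u-1\equiv X_\alpha\pmod{J^2}$. Carlson's lemma then reduces freeness of $\res{\ind{M}{E}}{\langle u_\alpha\rangle}$ to that of $\res{\ind{M}{E}}{\langle u\rangle}$, and on $\F H\otimes M$ the operator $u$ acts as $(1+y)\otimes(1+z)$.

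For the containment $V_E^\#(\ind{M}{E})\subseteq W$, suppose $\alpha\notin W$, so some $b_j\neq 0$ and $y\notin J(\F H)^2$. Since $\F H$ is free over itself, Dade's lemma (Lemma \ref{L: Dade}) gives $V_H^\#(\F H)=\{0\}$, whence $\res{\F H}{\langle 1+y\rangle}$ is free. The key technical point is the following: if $P$ is a free $\F C_p$-module and $N$ is any $\F C_p$-module, then $P\otimes_\F N$ with diagonal $C_p$-action is free. This reduces to $P=\F C_p$ and follows from the explicit $\F C_p$-isomorphism $\F C_p\otimes N\to \F C_p\otimes N^{\mathrm{triv}}$, $\sigma^i\otimes n\mapsto \sigma^i\otimes \sigma^{-i}n$, which collapses the diagonal action to a left-only one. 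Applying it with $C_p=\langle u\rangle$, $P=\F H$ (with $u$ acting via $(1+y)$) and $N=M$ (with $u$ acting via $(1+z)$) yields freeness of $\res{\ind{M}{E}}{\langle u\rangle}$, hence $\alpha\notin V_E^\#(\ind{M}{E})$.

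For the ``in particular'' clause, when $E'$ acts trivially on $M$ and $\alpha\in W$, we take the decomposition with $y=0$, so $u=1+z$. Each $f_j-1$ annihilates $M$, hence $z$ acts as zero on $M$ and $u$ acts as the identity on $\F H\otimes M$; the restriction to $\langle u\rangle$ is then a direct sum of trivial modules, which is not free (provided $M\neq 0$), giving $\alpha\in V_E^\#(\ind{M}{E})$ and thus the claimed equality. We expect the tensor-freeness step to be the main technical obstacle, since that is where the potentially non-trivial $E'$-action on $M$ must interact cleanly with the already-free cyclic shift on $\F H$.
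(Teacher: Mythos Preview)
Your proof is correct, and shares the same skeleton as the paper's: choose a complement $H$ of $E'$ among the $g_i$'s, identify $\ind{M}{E}\cong \F H\otimes M$, and use the mod-$J^2$ congruence to replace $X_\alpha$ by an element built from the $f_i-1$ and $h_j-1$. The ``in particular'' clause is handled identically.

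The genuine difference is in the freeness step when $\alpha\notin W$. The paper stops at $y+z$ and performs an explicit matrix computation: it chooses a basis of $\F H$ adapted to the free $\langle 1+z\rangle$-action, writes $[y+z]$ as a block matrix with $[y]$ on the diagonal and identity blocks on the subdiagonal, and row-reduces to read off $\rank(y+z)=\tfrac{p-1}{p}\dim(\ind{M}{E})$, which forces Jordan type $[p]^m$. You instead apply Carlson's lemma a second time to pass from $y+z$ to $(1+y)(1+z)-1$, so that the cyclic shifted subgroup acts as a genuine tensor product $(1+y)\otimes(1+z)$; then the standard untwisting isomorphism $\F C_p\otimes N\cong \F C_p\otimes N^{\mathrm{triv}}$ gives freeness without any matrices. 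Your route is cleaner and more conceptual, and makes transparent that the only input needed from $M$ is that $(1+z)^p=1$; the paper's route is more self-contained (it does not need the Hopf-algebra tensor trick) but the block row-reduction is somewhat ad hoc. Both approaches silently rely on the same fact that Jordan type over a cyclic shifted subgroup depends only on the class of the generator modulo $J^2$.
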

\begin{proof} Since $E'$ is a $p$-group, $M$ has composition factors copies of $\F_{E'}$. Therefore, $\ind{M}{E}$ has a filtration with factors copies of $\ind{\F_{E'}}{E}$. By Lemma \ref{L: filtra}, we have $V^\#_E(\ind{M}{E})\subseteq V^\#_E(\ind{\F_{E'}}{E})$. As such, we only need to prove that $V^\#_E(\ind{\F_{E'}}{E})=W$. 

For $1\leq i\leq s$, let $w_i=(w_{i1},\dots ,w_{ik})$, $f_i=g_1^{w_{i1}}\cdots g_k^{w_{ik}}$ and $\{e_i:1\leq i\leq k\}$ be the standard basis for $\A^k(\F)$.  Let $J=\rad(\F E)$ and $E''=\langle g_{j_1},\ldots,g_{j_t}\rangle$ be the subgroup of $E$ such that $s+t=k$, $1\leq j_1<\cdots<j_t\leq k$ and $E=E'\times E''$. Let $0\neq \alpha\in \A^k(\F)$ and suppose that
\begin{align*}\label{Eq: alpha}
\alpha=\sum_{i=1}^s a_iw_i+\sum_{i=1}^t b_ie_{j_i}=\sum_{i=1}^sa_i\left (\sum_{j=1}^k w_{ij}e_j\right )+\sum_{i=1}^tb_ie_{j_i}
\end{align*} where $a_i,b_i\in\F$. For any $\beta_1,\dots,\beta_k\in \F_p$, we have
\[\prod_{i=1}^k g_i^{\beta_i}=\prod_{i=1}^k (X_i+1)^{\beta_i}
                                          \equiv \sum_{i=1}^k \beta_iX_i +1 (\mod J^2).
\]
Consequently,
\begin{align*}
u_\alpha-1&=\sum_{i=1}^sa_i\left (\sum_{j=1}^k w_{ij}X_j\right )+\sum_{i=1}^t b_iX_{j_i} \nonumber\\
                 &\equiv\sum_{i=1}^sa_i\left (\prod_{j=1}^k g_j^{w_{ij}}-1\right )+\sum_{i=1}^tb_i(g_{j_i}-1))(\mod J^2)\nonumber\\
                 &=\sum_{i=1}^sa_i(f_i-1)+\sum_{i=1}^tb_i(g_{j_i}-1).  \label{Eq: 1}
\end{align*}
Let $z:=\sum_{i=1}^tb_i(g_{j_i}-1)$. Notice $z=0$ if and only if $\alpha\in W$. Also, $N:=\ind{\F_{E'}}{E}\cong \F E''$ where $E'$ acts trivially and $E''$ acts regularly. Therefore, $u_\alpha-1$ acts as $z$ on $N$. As such, $z\neq 0$ if and only if $u_\alpha$ acts freely on $N$. So $V^\#_E(N)=W$.
\end{proof}

In view of Lemma \ref{L: rkinduction}, we have the following definition.

{\begin{defn} For the vector space $V=\F^r$ over $\F$, a subspace $W$ of $V$ is called a base subspace if $W$ has a basis consisting of vectors of the form $(\lambda_1,\ldots,\lambda_r)\in \F_p^r$ where $\F_p$ is the base subfield of $\F$.
\end{defn}}

{For example, $\F^r$ is a base subspace of $\F^r$ with the standard basis. The total number of base subspaces of $\F^r$ is obviously finite.}

\begin{thm}\label{T: Green} Let $M$ be an indecomposable $\F G$-module and $E$ be an elementary abelian $p$-subgroup of $G$ of rank $r$. If $V^\#_E(\res{M}{E})$ is not contained in the union of proper base subspaces of $\F^r$ then $E$ is a subgroup of a Green vertex of $M$.
\end{thm}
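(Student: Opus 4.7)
The plan is to argue by contradiction, combining Mackey's formula with Lemma \ref{L: rkinduction}. Fix a Green vertex $Q$ of $M$ together with an $\F Q$-source $S$, so that $M \mid \ind{S}{G}$. Restricting to $E$ and applying Mackey's formula gives
\[
\res{M}{E} \;\Big|\; \bigoplus_{EgQ} \ind{\res{{}^gS}{E \cap {}^gQ}}{E},
\]
a finite direct sum over double coset representatives. For each such $g$, set $F_g = E \cap {}^gQ$, which is a subgroup of $E$. The strategy is to show that if $E$ is not contained in any conjugate of $Q$, then the rank variety $V_E^\#(\res{M}{E})$ is forced into a union of proper base subspaces of $\F^r$, contradicting the hypothesis.

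The next step is to translate each subgroup $F \leq E$ into a base subspace of $\F^r$. Writing $E = \langle g_1, \ldots, g_r\rangle$, any subgroup $F$ admits a generating set of the form $\{g_1^{w_{i1}}\cdots g_r^{w_{ir}} : 1 \leq i \leq s\}$ with $w_{ij} \in \{0, 1, \ldots, p-1\}$, and the associated base subspace
\[
W_F = \operatorname{span}\{(w_{i1}, \ldots, w_{ir}) : 1 \leq i \leq s\} \subseteq \F^r
\]
is a proper subspace precisely when $F$ is a proper subgroup of $E$. Applying Lemma \ref{L: rkinduction} to the induced module $\ind{\res{{}^gS}{F_g}}{E}$ yields
\[
V_E^\#\bigl(\ind{\res{{}^gS}{F_g}}{E}\bigr) \subseteq W_{F_g}.
\]

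Finally, since rank varieties respect direct summands and direct sums (Theorem \ref{T: basic rank}(iii)), combining the above gives
\[
V_E^\#(\res{M}{E}) \;\subseteq\; \bigcup_{EgQ} V_E^\#\bigl(\ind{\res{{}^gS}{F_g}}{E}\bigr) \;\subseteq\; \bigcup_{EgQ} W_{F_g},
\]
a finite union. If $F_g \subsetneq E$ for every $g$, then each $W_{F_g}$ is a proper base subspace of $\F^r$, contradicting the hypothesis on $V_E^\#(\res{M}{E})$. Therefore some $g$ satisfies $F_g = E$, i.e.\ $E \subseteq {}^gQ$; since ${}^gQ$ is itself a Green vertex of $M$, we conclude that $E$ is contained in a Green vertex of $M$.

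I do not anticipate a serious obstacle: the argument is essentially bookkeeping once one observes that every subgroup of $E$ corresponds canonically to a base subspace of $\F^r$ of the matching rank, and Lemma \ref{L: rkinduction} packages the geometric content. The only point to take care of is applying Lemma \ref{L: rkinduction} to a non-trivial source module $\res{{}^gS}{F_g}$, which is permitted by the lemma's first assertion (the containment $V_E^\#(\ind{M}{E}) \subseteq W$ holds for any $\F E'$-module $M$, not just the trivial one).
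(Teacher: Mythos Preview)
Your argument is correct and follows essentially the same route as the paper: take a vertex $Q$ with source $S$, apply Mackey's formula to $\res{M}{E}$, use Theorem~\ref{T: basic rank}(iii) to bound $V^\#_E(\res{M}{E})$ by the union of the varieties of the induced pieces, and invoke Lemma~\ref{L: rkinduction} to place each such piece in the base subspace attached to $E\cap{}^gQ$. Your write-up is slightly more explicit in spelling out the correspondence between subgroups of $E$ and base subspaces and in noting that Lemma~\ref{L: rkinduction} applies to an arbitrary source module, but the structure is the same.
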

\begin{proof} Let $Q$ be a vertex and $S$ be an $\F Q$-source of $M$. By Mackey's formula, we have \[\res{M}{E}\left | \bigoplus \ind{\res{{}^gS}{E\cap {}^gQ}}{E}\right ..\] Consequently, $V^\#_E(\res{M}{E})\subseteq \bigcup V^\#_E(\ind{\res{{}^gS}{E\cap {}^gQ}}{E})$ by Theorem \ref{T: basic rank}(iii). If $E$ is not contained in a conjugate of $Q$, then $E\cap {}^gQ$ is a proper subgroup of $E$. Hence, by Lemma \ref{L: rkinduction}, $V^\#_E(\ind{\res{{}^gS}{E\cap {}^gQ}}{E})$ is contained in a proper base subspace.
\end{proof}

\begin{eg} Let $p=3$, $E=\langle g_1,g_2\rangle\cong C_3\times C_3$ and consider the $\F E$-module $M_{\lambda,\mu}$ given in {\cite[Example 1.13.1]{benson2016}} where
\begin{align*}
X_1&\mapsto \begin{pmatrix}
0&0&0\\ 1&0&0\\ 0&1&0
\end{pmatrix},&
X_2&\mapsto \begin{pmatrix}
0&0&0\\ \lambda&0&0\\ \mu&\lambda&0
\end{pmatrix}.
\end{align*} For $(\alpha_1,\alpha_2)\in\A^2(\F)$, we have \[X_\alpha\mapsto \begin{pmatrix}
0&0&0\\ \alpha_1+\alpha_2\lambda&0&0\\ \alpha_2\mu&\alpha_1+\alpha_2\lambda&0
\end{pmatrix}=:U_\alpha.\] Clearly, $\alpha\in V^\#_E(M_{\lambda,\mu})$ if and only if $\alpha_1+\alpha_2\lambda=0$, i.e., $V^\#_E(M_{\lambda,\mu})$ is the line passing through $(-\lambda,1)$. If $\lambda\not\in \F_3$, then $(-\lambda,1)$ is a point of the rank variety but not belonging to the union of the base lines. In this case, by Theorem \ref{T: Green}, $E$ is the Green vertex of $M_{\lambda,\mu}$.

The converse is however incorrect even for this example. Let $\lambda=0$ and $\mu\neq 0$. {We claim that $E$ is the Green vertex of $M$ but $V^\#_E(M_{0,\mu})$ is contained in the union of some proper base lines.}
It is clear that $V^\#_E(M_{0,\mu})$ is the line passing through $(0,1)$.  Let $E'$ be a Green vertex of $M_{0,\mu}$.  Since $M_{0,\mu}$ is not projective, $E'>\{1\}$. Suppose that $E'\neq E$, i.e., $E'=\langle g_1^ag_2^b\rangle$ for some $0\leq a,b\leq 2$.  Since $M_{0,\mu}\mid \ind{\res{M_{0,\mu}}{E'}}{E}$, by Lemma \ref{L: rkinduction}, we have \[V^\#_E(M_{0,\mu})\subseteq V^\#_E(\ind{\res{M_{0,\mu}}{E'}}{E})\subseteq  \mathrm{span}\{(a,b)\}\] and hence $a=0$ and, without loss of generality, $b=1$. However, $\res{M_{0,\mu}}{\langle g_2\rangle}=U_1\oplus U_2$ where $\dim_\F U_i=i$ and $U_i$'s are indecomposable. By Green's indecomposability theorem (see \cite[Theorem 3.13.3]{Benson1}) and Krull-Schmidt Theorem, for \[M_{0,\mu}\mid \ind{\res{M_{0,\mu}}{E'}}{E}\cong \ind{U_1}{E}\oplus \ind{U_2}{E},\] we must have $\ind{U_1}{E}\cong M_{0,\mu}$. However, as $U_1$ is the trivial $\F E'$-module, \[\res{\ind{U_1}{E}}{E'}\cong \F\oplus\F\oplus \F \not\cong U_1\oplus U_2\cong \res{M_{0,\mu}}{E'}.\]
\end{eg}

\section{The natural simple module $D(1)$}\label{S: D1}
Let $n=kp>2$ and we consider the natural simple module $D(1)=D^{(kp-1,1)}$.  As in \S\ref{SS: sym}, let $E_k$ be the elementary abelian $p$-subgroup of $\sym{kp}$ generated by the $k$ disjoint $p$-cycles $g_1,\ldots,g_k$.  Our main result describes the maximal (or generic) Jordan type of $\res{D(1)}{E_k}$ for $p\geq 3$ {and the corresponding maximal Jordan set}. We note that the $p=2$ case has been dealt with in \cite{Jiang21}. Although our method in this section works even in this case, we have decided to leave it out so that the presentation would be neater. We assume $p\geq 3$ henceforth.

To begin, we recall the $(kp-1,1)$-tabloids $\mathfrak{t}_i$, $e_i=\mathfrak{t}_i-\mathfrak{t}_1$ (for $i\neq 1$) and the basis $\{ \e i:  i=3,\ldots,kp\}$ we have described for $D(1)$ where $\e i=e_i+D^{(n)}$ and $D^{(n)}$ is identified as the trivial submodule of $S^{(n-1,1)}$ spanned by $\sum_{i= 2}^{kp}e_i$. Similarly, we write $\t i$ for $\mathfrak{t}_i+D^{(n)}$.  The basis is not particularly convenient for our computation. As such, our first step is to pick another basis for $D(1)$ which interacts well with the action of $E_k$. For $2\leq i\leq k$ and $1\leq r\leq p$, let $e_{i,r}=e_{(i-1)p+r}$ and $\mathfrak{t}_{i,r}=\mathfrak{t}_{(i-1)p+r}$. Thus, $\e {i,r}=\t {i,r}-\t 1$ and $g_i\t {i,r}=\t {i,g_1r}$. Recall that, for each $i=1,\ldots,k$, we have $X_i=g_i-1$.


\begin{defn} 
Let $b_1=\e 3$ if $p=3$ and $b_1=\e 3 -\e 4$ if $p\geq 5$, $\B_1=\{ b_1,X_1^1b_1,\ldots,X_1^{p-3}b_1\}$. For $2\leq i\leq k$, let $b_i=\e {i,1} -\e 3$ and $\B_i=\{ b_i,X_i^1b_i,\ldots,X_i^{p-1}b_i\}$.
Let \[\B=\bigcup_{i=1}^{k} \B_i\].
\end{defn}

The next two lemmas demonstrate the structure of $\res{D(1)}{E_k}$.

\begin{lem}\label{L: D(1) basis} The set $\B$ is a basis for $D(1)$. Moreover,
\begin{enumerate}[(i)]
\item for $1\leq r \leq p-1$ and $2\leq i\leq k$, we have
\[ X_i^rb_i=\sum_{s=1}^{r+1} (-1)^{r-s+1} \binom{r}{s-1} \e {i,s};\]
\item if $p\geq 5$, we have for $1\leq r\leq p-4$, we have
\begin{align*}
X_1^rb_1&=\sum_{s=1}^{r+2} (-1)^{r-s+3} \binom{r+1}{s-1} \e {s+2},\\
X_1^{p-3}b_1&=\sum_{s=1}^{p-2}s\e {s+2}.
\end{align*}
\end{enumerate}
\end{lem}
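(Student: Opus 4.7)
The plan is to prove (i) and (ii) by induction on $r$ using the explicit action of the generators $X_i=g_i-1$ on the chosen basis of $D(1)$, and then to deduce the basis claim from a dimension count and a linear-independence argument. The first step is to record that action. Since $g_i$ fixes the index $1$ whenever $i\ge 2$, one has the clean formulas $X_i\e{i,s}=\e{i,s+1}-\e{i,s}$ for $1\le s\le p-1$, $X_i\e{i,p}=\e{i,1}-\e{i,p}$, and $X_i\e{j}=0$ for every $j$ outside the $i$-th block; in particular $X_i\e{3}=0$. For $i=1$, however, $g_1$ sends $1$ to $2$, producing an extra correction: $X_1\e{s}=\e{s+1}-\e{s}-\e{2}$ for $3\le s\le p-1$, $X_1\e{p}=-\e{p}-\e{2}$, and $X_1\e{s}=-\e{2}$ for $s\ge p+1$. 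The identity $\e{2}=-\sum_{j=3}^{kp}\e{j}$ coming from $\sum_{i=2}^{kp}\e{i}=0$ in $D(1)$ then rewrites these corrections in the chosen basis.

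Because $X_i\e{3}=0$ for $i\ge 2$, part (i) reduces to computing the iterates $X_i^r\e{i,1}$ inside a single $\langle g_i\rangle$-orbit, and a direct induction on $r$ using Pascal's identity $\binom{r}{s-1}+\binom{r}{s-2}=\binom{r+1}{s-1}$ yields the claimed closed form; the base case $r=1$ just reads $\e{i,2}-\e{i,1}$. The first half of (ii) follows the same template but with extra bookkeeping of the $\e{2}$ correction: one verifies inductively that for $1\le r\le p-4$ the support of $X_1^rb_1$ sits inside $\{\e{3},\ldots,\e{p-1}\}$ with coefficient-sum $0$, so the $-(\sum c_s)\e{2}$ term produced by one more application of $X_1$ vanishes and only the Pascal recombination survives, which matches the displayed binomial expression.

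The genuinely delicate step is the transition $r=p-4\to r=p-3$: at $r=p-4$ the support first reaches $\e{p}$, and applying $X_1$ once more, together with the substitution $\e{2}=-\sum_{j=3}^{kp}\e{j}$, should produce both the coefficients $1,2,\ldots,p-2$ on $\e{3},\ldots,\e{p}$ (recognised via the mod-$p$ identity $\binom{p-2}{s-3}\equiv(-1)^{s-3}(s-2)\pmod p$) and the extra summand $\sum_{s=p+1}^{kp}\e{s}$ coming from the now-nonzero coefficient sum multiplying $\e{2}$. Finally, for the basis claim one has $|\B|=(p-2)+(k-1)p=kp-2=\dim_\F D(1)$, so only linear independence is needed; writing the vectors of $\B$ in the ordered basis $\e{3},\ldots,\e{p},\e{2,1},\ldots,\e{k,p}$ of $D(1)$ gives an essentially block lower-triangular matrix whose diagonal blocks are unitriangular in the natural Pascal ordering and hence invertible. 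The main obstacle will be precisely this transition step, where the mod-$p$ reduction of the surviving binomials and the emergence of the $\sum_{s=p+1}^{kp}\e{s}$ contribution must both be extracted from a single careful application of $X_1$.
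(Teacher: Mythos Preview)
Your overall plan matches the paper's: induct on $r$ with Pascal's identity for (i) and (ii), then prove the basis claim by a size-plus-rank argument. The one methodological difference is that the paper carries out the $X_1$ induction in terms of the tabloid images $\t{j}=\mathfrak t_j+D^{(kp)}$, on which $g_1$ acts by the pure permutation $\t{j}\mapsto\t{g_1(j)}$, so no running $-\e{2}$ correction is needed; the global relation enters only once, at $r=p-3$, where $g_1\t{p}=\t{1}$ produces a $-\t{1}$ term and one substitutes $-\t{1}=\sum_{j\ge2}\t{j}$.

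There is, however, a concrete error in your transition step. You assert that at $r=p-4$ the coefficient sum of $X_1^{p-4}b_1$ becomes nonzero and that the resulting $\e{2}$ substitution produces the tail $\sum_{s>p}\e{s}$. But that alternating sum is
\[
\sum_{s=1}^{p-2}(-1)^{p-1-s}\binom{p-3}{s-1}=(-1)^{p-2}(1-1)^{p-3}=0\qquad(p>3),
\]
so the $\e{2}$ corrections still cancel. Running your own recipe correctly (using $X_1\e{p}=-\e{p}-\e{2}$ at the top and the congruence $\binom{p-2}{j}\equiv(-1)^j(j+1)$) gives $X_1^{p-3}b_1=\sum_{s=1}^{p-2}s\,\e{s+2}$ with \emph{no} tail. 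In fact the extra $\sum_{s>p}\e{s}$ in the displayed formula is a slip: the paper's tabloid computation yields $\t{2}+2\t{3}+\cdots+(p-1)\t{p}+\sum_{s>p}\t{s}$, which in the $\e$-basis is $\e{2}+2\e{3}+\cdots+(p-1)\e{p}+\sum_{s>p}\e{s}$, and this equals $\sum_{s=1}^{p-2}s\,\e{s+2}$ after using $\sum_{j\ge2}\e{j}=0$. The paper's own linear-independence argument a few lines later uses exactly this tail-free form, so nothing downstream is affected---but your proposed mechanism for generating the tail is wrong.

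A smaller gap: the $\B_1$ diagonal block is not literally unitriangular (row $X_1^rb_1$ is supported on columns $1$ through $r{+}2$, so the last two rows both have full support), and the paper establishes its invertibility by an explicit row reduction. Your block-triangular reduction is correct and brings the problem down to this single $(p-2)\times(p-2)$ block, but that block still needs a genuine argument.
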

\begin{proof} We show the equations in the statement by induction on $r$. Suppose that $2\leq i\leq k$. Notice that \[b_i=\e {i,1} -\e 3=(\t {i,1}-\t 1)-(\t 3-\t 1)=\t {i,1}-\t 3,\] and we also have
\begin{align*}
 \sum_{s=1}^{r+1} (-1)^{r-s+1} \binom{r}{s-1} \e {i,s}
 &=\sum_{s=1}^{r+1} (-1)^{r-s+1} \binom{r}{s-1}(\t {i,s}-\t 1)\\
 &=\left(\sum_{s=1}^{r+1} (-1)^{r-s+1} \binom{r}{s-1}\t {i,s}\right )-\left (\sum_{s=1}^{r+1} (-1)^{r-s+1} \binom{r}{s-1}\right )\t 1\\
 &=\sum_{s=1}^{r+1} (-1)^{r-s+1} \binom{r}{s-1}\t {i,s}.
 \end{align*}
When $r=1$, we have
\begin{align*}
X_ib_i&=(g_i-1)(\t {i,1}-\t 3) =(g_i\t {i,1}-g_i\t 3)-(\t {i,1}-\t 3)=\t {i,g_11}-\t 3-\t {i,1}+\t 3\\
           &=\t {i,2}-\t {i,1}=-\e {i,1}+\e {i,2}.
\end{align*}
Suppose that part (i) holds for some $1\leq r\leq p-2$. We have
\begin{align*}
X_i(X_i^rb_i)&=(g_i-1)\left (\sum_{s=1}^{r+1} (-1)^{r-s+1} \binom{r}{s-1} \t {i,s}\right )\\
                    &=\left (\sum_{s=1}^{r+1} (-1)^{r-s+1} \binom{r}{s-1} g_i\t {i,s}\right )-\left (\sum_{s=1}^{r+1} (-1)^{r-s+1} \binom{r}{s-1} \t {i,s}\right )\\
                    &=\sum_{s=1}^{r+1} (-1)^{r-s+1} \binom{r}{s-1} \t {i,s+1}+\sum_{s=1}^{r+1} (-1)^{r+1-s+1} \binom{r}{s-1} \t {i,s}\\
                    &=\sum_{s=2}^{r+2} (-1)^{r-s+2} \binom{r}{s-2} \t {i,s}+\sum_{s=1}^{r+1} (-1)^{r+1-s+1} \binom{r}{s-1} \t {i,s}\\
                    &=(-1)^{r+1}\t {i,1}+\t {i,r+2}+\sum_{s=2}^{r+1} (-1)^{r-s+2} \left (\binom{r}{s-2}+\binom{r}{s-1}\right ) \t {i,s}\\
                    &=\sum_{s=1}^{r+2} (-1)^{r-s+2} \binom{r+1}{s-1} \t {i,s}.
\end{align*}
For part (ii), suppose $p\geq 5$. If $r=1$, then
\begin{align*}
X_1b_1&=(g_1-1)(\e 3-\e 4)=(g_1-1)(\t 3-\t 4)=(\t 4-\t 5)-(\t 3-\t 4)\\
            &=-\t 3+2\t 4-\t 5=-\e 3+2\e 4-\e 5.
\end{align*}
For $1\leq r\leq p-5$, we have
\begin{align*}
X_1(X_1^rb_1)&=(g_1-1)\left (\sum_{s=1}^{r+2} (-1)^{r-s+3} \binom{r+1}{s-1} \t {s+2}\right )\\
                       &=\sum_{s=1}^{r+2} (-1)^{r-s+3} \binom{r+1}{s-1} g_1\t {s+2}-\sum_{s=1}^{r+2} (-1)^{r-s+3} \binom{r+1}{s-1} \t {s+2}\\
                       &=\sum_{s=1}^{r+2} (-1)^{r-s+3} \binom{r+1}{s-1}\t {s+3}+\sum_{s=1}^{r+2} (-1)^{r+1-s+3} \binom{r+1}{s-1} \t {s+2}\\
                       &=\sum_{s=2}^{r+3} (-1)^{r+1-s+3} \binom{r+1}{s-2}\t {s+2}+\sum_{s=1}^{r+2} (-1)^{r+1-s+3} \binom{r+1}{s-1} \t {s+2}\\
                       &=(-1)^{r+3}\t {3}+(-1)\t {r+5}+\sum_{s=2}^{r+2} (-1)^{r+1-s+3} \left (\binom{r+1}{s-2}+\binom{r+1}{s-1}\right ) \t {s+2}\\
                       &=\sum_{s=1}^{r+3} (-1)^{r+1-s+3} \binom{r+2}{s-1} \e {s+2}.
\end{align*}
Lastly, when $r=p-4$, we have
\begin{align*}
X_1(X_1^{p-4}b_1)&=(g_1-1)\left (\sum_{s=1}^{p-2} (-1)^{p-1-s} \binom{p-3}{s-1} \t {s+2}\right )\\
&=\left (\sum_{s=1}^{p-2} (-1)^{s+1} \binom{p-2}{s-1} \t {s+2}\right )-g_1\t p\\
                              &=\left (\sum_{s=1}^{p-2} s \t {s+2}\right )- \t 1
                              =\t 2+2\t 3+\ldots+(p-1)\t p+\sum_{s=p+1}^{kp}\t s\\
                              &=\e 3+2\e 4+\ldots+(p-2)\e p.
\end{align*}
{Given the description above, it is clear that, for $2\leq i\leq k$, $B_i$ is linearly independent and $\mathrm{span}(B_i)= \mathrm{span}\{\e{i,1},\dots,\e{i,p}\}$ as vector spaces. Furthermore, $\bigcup_{i=1}^{p-2} B_i$ is a basis if and only if the set $\{b_1,\dots, X_1^{p-4}b_1,\sum_{i=3}^{p}(i-2)\e {i}\}$ is linearly independent. We have $(b_1,\dots, X_1^{p-4}b_1,\sum_{i=3}^{p}(i-2)\e {i}\})^\top=A(\e 3,\e 4,\dots, \e p)^\top$ where $\top$ denotes the transpose of a matrix and $A$ is the matrix where, for $1\leq i,j\leq p-2$,
\[A_{ij}=(-1)^{i-j+2}\binom{i}{j-1}.
\]
Thus, the set $\{b_1,\dots, X_1^{p-4}b_1,\sum_{i=3}^{p}(i-2)\e {i}\}$ is linearly independent if and only if $\rank A=p-2$. For $1\leq i\leq p-3$, let $T_i$ represent the elementary row operation adding $\sum_{k=1}^{i}(-1)^{i-k+2}\binom{i+1}{k}R_k$ to the $(i+1)$th row where $R_k$ denotes the $k$th row of $R$.
Performing the row operations in the order $T_1, T_2, \dots, T_{p-3}$ on $A$, we obtain the following matrix $A'$:
\[A'=\begin{pmatrix}
1&-1&\\
1&0&-1&\\
\vdots&\vdots&\ddots&\ddots&\\
1&0&\cdots&0&-1\\
1&0&\cdots&0&0
\end{pmatrix}.\]
Thus, $\rank A=\rank {A'}=p-2$. We conclude that $\B$ is a basis for $D(1)$.
}
\end{proof}

\begin{lem}\label{L: action}
Let $1\leq j\leq k$. We have
\begin{enumerate}[(i)]
\item for $2\leq i\leq k$ and $0\leq r\leq p-1$,
                  \[
                   X_j(X_i^rb_i)=\left \{\begin{array}{ll}
                   b_1&\text{if $j=1$ and $r=0$,}\\
                   0 &\text{if $j\neq i$ or $r=p-1$.}\end{array}\right .
                   \]
\item for $0\leq r\leq p-3$,
      \[
         X_j(X_1^rb_1)=\left \{\begin{array}{ll}
          \sum_{i=2}^{k} X_i^{p-1}b_i &\text{if $j=1$ and $r=p-3$,}\\
          0 &\text{if $j\neq 1$.}\end{array}\right .
         \]
\end{enumerate} In particular, $X_iX_j$ annihilates $D(1)$ for all $1\leq i\neq j\leq k$.
\end{lem}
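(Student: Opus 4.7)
The plan is to verify the lemma by direct case-by-case computation on each basis vector of $\B$, exploiting the explicit expansions of Lemma \ref{L: D(1) basis} together with the elementary action $g_j\mathfrak{t}_m=\mathfrak{t}_{g_j(m)}$ and the convention $\e m=\mathfrak{t}_m-\mathfrak{t}_1$.

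First I would dispatch the easy vanishing cases. When $j\notin\{1,i\}$, the $p$-cycle $g_j$ fixes every index outside the $j$th block; in particular it fixes $\mathfrak{t}_1$, every $\mathfrak{t}_m$ with $m\le p$, and every $\mathfrak{t}_{i,s}$. Hence $X_j\e m=0$ for $m\le p$ and $X_j\e{i,s}=0$, so from the expansions in Lemma \ref{L: D(1) basis} both $X_j(X_i^rb_i)=0$ and $X_j(X_1^rb_1)=0$ follow at once. The remaining vanishing case in part (i), namely $j=i$ and $r=p-1$, is $X_i^pb_i=0$ via $(g_i-1)^p=g_i^p-1=0$. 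The ``crossing'' identity $X_1b_i=b_1$ for $i\ge 2$ is a small direct computation: $g_1$ fixes $\mathfrak{t}_{i,1}$ but sends $\mathfrak{t}_1\mapsto\mathfrak{t}_2$, giving $X_1\e{i,1}=-\e 2$; while $X_1\e 3=\mathfrak{t}_{g_1(3)}-\mathfrak{t}_3-\e 2=\e 4-\e 3-\e 2$, and the two $-\e 2$ contributions cancel upon subtraction, leaving $\e 3-\e 4=b_1$.

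The most delicate case, and the expected main obstacle, is the boundary identity $X_1\cdot X_1^{p-3}b_1=\sum_{i=2}^kX_i^{p-1}b_i$ in part (ii). Applying $X_1$ to the formula of Lemma \ref{L: D(1) basis}(ii) term by term, I would use $X_1\e m=\e{m+1}-\e m-\e 2$ for $3\le m\le p-1$ and the wrap-around $X_1\e p=-\e p-\e 2$ (because $g_1(p)=1$). The non-$\e 2$ pieces telescope, while the accumulated $\e 2$-coefficient is collected via the congruence $1+2+\cdots+(p-2)=\tfrac{(p-1)(p-2)}{2}\equiv 1\pmod p$. The concluding step is to invoke the defining relation $\sum_{m=2}^{kp}\e m=0$ of $D(1)$, which converts the outcome into $\sum_{m=p+1}^{kp}\e m$. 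On the right-hand side I would separately reduce the formula of Lemma \ref{L: D(1) basis}(i) using $\binom{p-1}{j}\equiv(-1)^j\pmod p$ to obtain $X_i^{p-1}b_i=\sum_{s=1}^p\e{i,s}$, so that $\sum_{i=2}^kX_i^{p-1}b_i=\sum_{m=p+1}^{kp}\e m$, matching the left-hand side.

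Finally, the ``in particular'' clause is a short aftermath. For $i\ne j$ and any basis vector $v\in\B$, parts (i) and (ii) identify $X_jv$ as one of: $0$, the next basis vector $X_j^{r+1}b_j$ within block $j$, $b_1$, or $\sum_{l\ge 2}X_l^{p-1}b_l=\sum_{m>p}\e m$. A second application of $X_i$ kills each possibility: $X_i$ annihilates every block-$j$ basis vector by the easy vanishing paragraph (since $i\ne j$); $X_ib_1=0$ by part (ii) (since $i\ne 1$); and $X_i$ annihilates $\sum_{m>p}\e m$ block-by-block via the cyclic telescoping $\sum_{s=1}^p(\e{i,s+1}-\e{i,s})=0$. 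Hence $X_iX_j$ annihilates every basis vector, and therefore all of $D(1)$.
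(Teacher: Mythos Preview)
Your approach is the same direct case-by-case verification as the paper's; the only real difference is that the paper computes with the tabloid representatives $\t m$ while you work with $\e m=\t m-\t 1$. This cosmetic choice creates one small gap you should patch. In tabloid coordinates one has, for $i\ge 2$ and $r\ge 1$, $X_i^rb_i=\sum_s c_s\t{i,s}$ (supported entirely in block $i$), and since $g_j\t{i,s}=\t{i,s}$ for every $j\ne i$ \emph{including} $j=1$, the vanishing $X_j(X_i^rb_i)=0$ is immediate. In your $\e$-coordinates, however, $X_1\e{i,s}=-\e 2\neq 0$, so the case $j=1$, $r\ge 1$ of part~(i) is not covered by your ``$j\notin\{1,i\}$'' paragraph; you need the extra remark that the coefficients in Lemma~\ref{L: D(1) basis}(i) satisfy $\sum_{s=1}^{r+1}(-1)^{r-s+1}\binom{r}{s-1}=(1-1)^r=0$ for $r\ge 1$. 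Similarly, for part~(ii) with $j\ge 2$ and $r=p-3$, the expansion of $X_1^{p-3}b_1$ contains $\sum_{m>p}\e m$, which includes block-$j$ terms $\e{j,s}$ not killed by $X_j$; here the cyclic telescoping $\sum_{s}(\e{j,s+1}-\e{j,s})=0$ (which you correctly invoke later) is needed already at this point. Both are one-line fixes. Your handling of the boundary identity $X_1(X_1^{p-3}b_1)=\sum_{i\ge 2}X_i^{p-1}b_i$ and of the ``in particular'' clause is correct and in fact more explicit than the paper's.
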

\begin{proof}
{Recall that $b_1= \t 3-\t 1$ if $p=3$ and $\t 3-\t 4$ if $p\geq 5$, and $b_i=\t {i,1}-\t 3$ for $2\leq i\leq k$.} Suppose $2\leq i\neq j\leq k$. If $p=3$, we have
\begin{align*}
X_1b_i&=(g_1-1)(\t {i,1} -\t 3)=(\t {i,1} -\t 1)-(\t {i,1}-\t {3})=\t 3-\t 1=b_1.
\end{align*}
If $p\geq 5$, we have
\[
X_1b_i=(g_1-1)(\t {i,1} -\t 3)=(\t {i,1} -\t 4)-(\t {i,1}-\t {3})=\e 3-\e 4=b_1.\]
For $2\leq i \leq k$, we have
\[X_i(X_i^{p-1}b_i)=X_i^{p}b_i=0b_i=0.
\]
Since $g_{j}(\t {i,r})=\t {i,r}$, we have that $X_{j}(X_i^rb_i)=0$. Thus, the equalities in part (i) hold. For part (ii), we only show $X_1(X_1^{p-3}b_1)=\sum_{i=2}^k X_i^{p-1}b_i$ and the rest are very similar to the cases in part (i). If $p=3$, we have
\begin{align*}     
X_1 b_1&=(g_1-1)(\t 3-\t 1)\\
             &=(\t 1-\t 2)-(\t 3-\t 1)\\
             &=-\e 2-\e 3\\
             &=\sum_{s=4}^{3k} \e s=\sum_{i=2}^k X_i^{2}b_i.
\end{align*}
If $p\geq 5$, we have
\begin{align*}
X_1(X_1^{p-3}b_1)&=(g_1-1)\left (\t 2+2\t 3+\ldots+(p-1)\t p\right )\\
                              &=\t 3+2\t 4+\ldots+(p-2)\t p-\t 1-(\t 2+2\t 3+\ldots+(p-1)\t p)\\
                              &=-\t 2-\t 3-\ldots -\t p +\left (\sum_{s=2}^{kp}\t s\right )\\
                              &=\sum_{s=p+1}^{kp}\t s=\sum_{i=2}^k X_i^{p-1}b_i.
\end{align*}

\end{proof}

The following diagram depicts the actions of the $X_j$'s on the basis $\B$ of $D(1)$. The blue arrows represent the action of $X_1$ and the remaining arrows represent the respective actions of $X_j$'s for $j\neq 1$. The $+$ in the diagram indicates that $X_1^{p-2}b_1=\sum^k_{j=2}X_j^{p-1}b_j$.

\[\begin{tikzpicture}
  \draw[dashed] (0,0) ellipse (4cm and .5cm);
  \draw[dashed] (0,-6) ellipse (4cm and .5cm);
  \node (b2) at (-4,0) [circle,draw,fill=black,scale=.5] {};
  \node (b21) at (-5,-1.1) [circle,draw,fill=black,scale=.5] {};
  \node (b22) at (-5,-2.1) [circle,draw,fill=black,scale=.5] {};
  \node at (-5,-3.1) {$\vdots$};
  \node (b23) at (-5,-4.1) [circle,draw,fill=black,scale=.5] {};
  \node (b24) at (-5,-5.1) [circle,draw,fill=black,scale=.5] {};
  \node (b25) at (-4,-6) [circle,draw,fill=black,scale=.5] {};
  \node (b3) at (-2,-0.45) [circle,draw,fill=black,scale=.5] {};
  \node (b31) at (-3,-1.3) [circle,draw,fill=black,scale=.5] {};
  \node (b32) at (-3,-2.3) [circle,draw,fill=black,scale=.5] {};
  \node at (-3,-3.3) {$\vdots$};
  \node (b33) at (-3,-4.3) [circle,draw,fill=black,scale=.5] {};
  \node (b34) at (-3,-5.3) [circle,draw,fill=black,scale=.5] {};
  \node (b35) at (-2,-6.5) [circle,draw,fill=black,scale=.5] {};
  \node (b1) at (0,-1.5) [circle,draw,fill=black,scale=.5] {};
  \node (b11) at (0,-2.5) [circle,draw,fill=black,scale=.5] {};
  \node at (0,-3.25) {$\vdots$};
  \node (b13) at (0,-4) [circle,draw,fill=black,scale=.5] {};
  \node (b14) at (0,-5) [circle,draw,fill=black,scale=.5] {};
  \node (b4) at (2,-0.45) [circle,draw,fill=black,scale=.5] {};
  \node (b41) at (3,-1.3) [circle,draw,fill=black,scale=.5] {};
  \node (b42) at (3,-2.3) [circle,draw,fill=black,scale=.5] {};
  \node at (3,-3.3) {$\vdots$};
  \node (b43) at (3,-4.3) [circle,draw,fill=black,scale=.5] {};
  \node (b44) at (3,-5.3) [circle,draw,fill=black,scale=.5] {};
  \node (b45) at (2,-6.5) [circle,draw,fill=black,scale=.5] {};
  \draw[-angle 45,thick,color=blue]  (b3) -- (b1);
  \draw[-angle 45] (b3) -- (b31);
  \draw[-angle 45]  (b31) -- (b32);
  \draw[-angle 45]  (b33) -- (b34);
  \draw[-angle 45] (b34) -- (b35);
  \draw[-angle 45,thick,color=blue] (b2) -- (b1);
  \draw[-angle 45] (b2) -- (b21);
  \draw[-angle 45] (b21) -- (b22);
  \draw[-angle 45] (b23) -- (b24);
  \draw[-angle 45] (b24) -- (b25);
  \draw[-angle 45,thick,color=blue] (b1) -- (b11);
  \draw[-angle 45,thick,color=blue] (b13) -- (b14);
  \draw[-angle 45,thick,color=blue] (b14) -- (b25);
  \draw[-angle 45,thick,color=blue] (b14) -- (b35);
  \draw[-angle 45,thick,color=blue] (b14) -- (b45);
  \draw[-angle 45,thick,color=blue] (b4) -- (b1);
  \draw[-angle 45] (b4) -- (b41);
  \draw[-angle 45] (b41) -- (b42);
  \draw[-angle 45] (b43) -- (b44);
  \draw[-angle 45] (b44) -- (b45);
  \node at (-4,0.5) {$b_3$};
  \node at (-5.5,-0.8) {$X_3b_3$};
  \node at (-5.6,-1.8) {$X_3^2b_3$};
  \node at (-4,-6.5) {$X_3^{p-1}b_3$};
  \node at (-2,0) {$b_2$};
  \node at (-3.5,-1) {$X_2b_2$};
  \node at (-3.6,-2) {$X_2^2b_2$};
  \node at (-2,-7) {$X_2^{p-1}b_2$};
  \node at (2,0) {$b_k$};
  \node at (3.5,-1) {$X_kb_k$};
  \node at (3.6,-2) {$X_k^2b_k$};
  \node at (2,-7) {$X_k^{p-1}b_k$};
  \node at (0.5,-1.5) {$b_1$};
  \node at (0.6,-2.5) {$X_1b_1$};
  \node at (0.9,-5) {$X_1^{p-3}b_1$};
  \node at (0,-6) {$+$};
\end{tikzpicture}\]

For $0\neq \alpha\in \A^k(\F)$, let $X_\alpha=\sum_{i=1}^k \alpha_iX_i$ as in the preliminary.  By Lemma \ref{L: action}, we have \[
\begin{tikzpicture}
\matrix(m)[matrix of math nodes,left delimiter=(,right delimiter=),row sep=0.05cm,column sep=0.05cm]
{
        0     &{}           &{}              &{}             &\alpha_1  &{}        &{} &{}        &{}&{}&\alpha_1  &{}           &{}  &{} \\
\alpha_1 &    0    &{}               &{}             &{}              &{}          &{}            &{}         &\ldots&{}&{}             &{}              &{}            &{}  \\
        {}          &\ddots& \ddots    &{}             &{}     &{}           &{}            &{}         &{}&&{} &{}   &{}             &{}            &{}  \\
          {}        &{}          & \alpha_1&     0    &{}              &{}           &{}           &{}       &{}&{}&{}&{}             &{}             &{}            &{} \\
        {}    &{}            &{}              &{}           &   0   &{}            &{}            &{}                &{}&{}&{}&{}              &{}            &{}       &{}  \\
         {}       &{}            &{}              &{}             &\alpha_2  &    0     &{}             &{}      &{}&{}&{}&{}                 &{}             &{}       &{}  \\
        {}        &{}            &{}              &{}             &{}                & \ddots &  \ddots&{}      &{}&{}&{} &{}               &{}             &{}         &{}  \\
       {}        &{}            &{}             &\alpha_1&{}               &{}            &\alpha_2&  0  &{}&{}&{} &{}                 &{}             &{}       &{} \\
  {} &\vdots&{}&{}&{}&{}&{}&{}&\ddots&{}&{}&{}&{} \\
    {}      &{}            &{}              &{}           &{}       &{}            &{}            &{}      &{}&{}&      0    &{}            &{}       &{}  \\
          {}   &{}            &{}              &{}            &{}        &{}            &{}             &{}      &{}&{}&   \alpha_k&  0       &{}       &{}  \\
         {}       &{}            &{}              &{}            &{}        &{}            &{}              &{}      &{}&{}&{}             &  \ddots & \ddots&{}  \\
        {}       &{}            &{}             &\alpha_1&{}         &{}            &{}             &{}   &{}&{}&{}                &{}          &\alpha_k  &0 \\
        };
         \node[fit=(m-1-1)(m-4-4),draw=blue,dashed,inner sep=0.5mm]{};
         \node[fit=(m-5-5)(m-8-8),draw=blue,dashed,inner sep=0.5mm]{};
         \node[fit=(m-10-11)(m-13-14),draw=blue,dashed,inner sep=0.5mm]{};
         \draw[dashed,color=orange] (-2,2) rectangle (0.8,4.8);
         \draw[dashed,color=orange] (2,2) rectangle (5,4.8);
         \draw[dashed,color=orange] (-5.1,-0.95) rectangle (-2.2,1.8);
         \draw[dashed,color=orange] (-5.1,-4.75) rectangle (-2.2,-1.9);
         \node at (-2.8,4.5) {$S(1,1)$};
         \node at (0.1,4.5) {$S(1,2)$};
         \node at (4.3,4.5) {$S(1,k)$};
         \node at (-2.9,1.5) {$S(2,1)$};
         \node at (0.2,1.5) {$S(2,2)$};
         \node at (-2.9,-2.2) {$S(k,1)$};
         \node at (4.2,-2.2) {$S(k,k)$};
         \node at (-6.7,0) {$[X_\alpha]_\B=$};
\end{tikzpicture}\] where we have deliberately left out the zero entries and it is comprised of the submatrices $S(i,j)$, where
\begin{enumerate}[(a)]
\item the diagonal submatrices $S(i,i)$ are square of sizes $(p-2)\times (p-2)$ if $i=1$ and $p\times p$ if $i\geq 2$,
\item for $2\leq i\leq k$, $S(i,1)$ is the matrix with the $(p,p-2)$-entry $\alpha_1$ and zero elsewhere,
\item for $2\leq j\leq k$, $S(1,j)$ is the matrix with the $(1,1)$-entry $\alpha_1$ and zero elsewhere, and
\item $S(i,j)$ is the zero matrix otherwise.
\end{enumerate}

Notice that, in the case $k=1$, $[X_\alpha]_\B=S(1,1)$. In order to compute the Jordan type of $[X_\alpha]_\B$, in general, we need to know $\rank([X_\alpha]_\B^r)$ for all $1\leq r\leq p-1$. The next lemma addresses such computation.  Recall the polynomial $p_k$ in Equation \ref{Eq: pk}.

\begin{lem}\label{rank}
Let $k\geq 2$ and $S$ be the matrix $[X_\alpha]_\B$. We have
\begin{enumerate}[(i)]
\item $\rank(S)\leq (k-1)(p-1)+p-3$ and the equality holds if and only if all $\alpha_i$'s are nonzero;
\item if $p\geq 5$ and all $\alpha_i$'s are nonzero, then $\rank(S^{p-3})=3k-2$ and $\rank(S^{p-2})=2k-2$;
\item if all $\alpha_i$'s are nonzero, then $\rank(S^{p-1})\leq k-1$ and the equality holds if and only if $p_k(\alpha_1,\alpha_2,\ldots,\alpha_k)\neq 0$.
\end{enumerate}
\end{lem}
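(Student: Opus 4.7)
My plan is to derive closed-form expressions for the iterated images $S^r(v)$ at each basis vector $v \in \B$ by a direct induction on $r$ using Lemma \ref{L: action}, and then read off the three ranks by carefully bookkeeping the resulting images. The central identities are, for $i \ge 2$ and $1 \le r \le p-2$,
\begin{align*}
S^r(b_i) &= \alpha_i^r X_i^r b_i + \alpha_1^r X_1^{r-1} b_1,\\
S^{p-1}(b_i) &= \alpha_i^{p-1} X_i^{p-1} b_i + \alpha_1^{p-1} \sum_{j=2}^{k} X_j^{p-1} b_j,
\end{align*}
together with the analogous simple rules for $S^r$ on the remaining basis vectors: $S^r(X_i^s b_i) = \alpha_i^r X_i^{s+r} b_i$ for $i \ge 2$, $s \ge 1$ (or zero if $s+r \ge p$), $S^r(b_1) = \alpha_1^r X_1^r b_1$ for $r \le p-3$, with the single ``bridge'' case $S^r(X_1^s b_1) = \alpha_1^r \sum_{j=2}^{k} X_j^{p-1} b_j$ occurring precisely when $s+r = p-2$, and everything vanishing further. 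All of these are immediate from Lemma \ref{L: action} by induction on $r$.

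For part (i), the $r = 1$ specialisation lists all columns of $S$. The image lies in the span of $\B \setminus \{b_2, \ldots, b_k\}$, a subspace of dimension $kp - k - 1$, and the columns $S(X_1^{p-3}b_1) = \alpha_1 \sum_{j=2}^{k} X_j^{p-1} b_j$ and $\{S(X_i^{p-2}b_i) = \alpha_i X_i^{p-1} b_i\}_{i \ge 2}$ all lie in the $(k-1)$-dimensional span of $\{X_j^{p-1} b_j : 2 \le j \le k\}$, forcing one linear relation. Accounting for this gives the upper bound $\rank(S) \le (k-1)(p-1) + (p-3)$. Under the hypothesis that every $\alpha_i \ne 0$, direct inspection verifies that the other columns are linearly independent and the bound is attained; the converse is handled by a block-by-block check showing the rank strictly drops whenever some $\alpha_i$ vanishes.

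For part (ii), inserting $r = p-3$ and $r = p-2$ into the iterated formulas (still assuming all $\alpha_i \ne 0$) leaves respectively $3k - 1$ and $2k - 1$ nonzero image vectors. In each case the only forced relation is again the one writing $\alpha_1^r \sum_{j} X_j^{p-1} b_j$ as a combination of the $\alpha_i^r X_i^{p-1} b_i$'s, and the remaining vectors are easily seen to be independent, giving $\rank(S^{p-3}) = 3k-2$ and $\rank(S^{p-2}) = 2k-2$. For part (iii), only the $k-1$ columns $S^{p-1}(b_i)$ with $i \ge 2$ can be nonzero, so $\rank(S^{p-1}) \le k-1$ is immediate; when expressed in the ordered basis $(X_2^{p-1}b_2, \ldots, X_k^{p-1}b_k)$ of the subspace these columns inhabit, they form the $(k-1) \times (k-1)$ matrix
\[M = \operatorname{diag}(\alpha_2^{p-1}, \ldots, \alpha_k^{p-1}) + \alpha_1^{p-1} J,\]
where $J$ is the all-ones matrix, and the equality $\rank(S^{p-1}) = k-1$ is equivalent to $\det(M) \ne 0$.

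The remaining step is the determinant calculation, which I regard as the main technical point (though still routine). Writing $J = e e^{\top}$ with $e = (1,\ldots,1)^{\top}$ and applying the rank-one perturbation identity $\det(D + uv^{\top}) = \det(D) + v^{\top}\operatorname{adj}(D)\,u$ to $D = \operatorname{diag}(\alpha_2^{p-1}, \ldots, \alpha_k^{p-1})$, $u = \alpha_1^{p-1}e$, $v = e$ yields
\[\det(M) = \prod_{j=2}^{k} \alpha_j^{p-1} + \alpha_1^{p-1} \sum_{i=2}^{k} \prod_{\substack{2 \le j \le k \\ j \ne i}} \alpha_j^{p-1} = p_k(\alpha_1, \ldots, \alpha_k),\]
the last equality being obtained by partitioning the definition of $p_k$ in Equation \ref{Eq: pk} into its $\widehat{x_1}$ summand and the summands with $\widehat{x_i}$ for $i \ge 2$. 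Apart from recognising $p_k$ at this final step, the only obstacle is the careful organisational tracking of the single forced linear dependency at each power of $S$.
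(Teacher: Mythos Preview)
Your proposal is correct and follows essentially the same route as the paper: compute $S^r$ explicitly on the basis $\B$ (you phrase this via iterated action formulas $S^r(v)$; the paper instead displays the matrices $S^{p-3},S^{p-2},S^{p-1}$ entrywise), count the nonzero columns/rows, and locate the single dependency coming from the bridge relation $X_1^{p-2}b_1=\sum_{j\ge 2}X_j^{p-1}b_j$. The only substantive difference is in part~(iii), where you evaluate $\det M$ via the rank-one-update identity $\det(D+uv^\top)=\det(D)+v^\top\operatorname{adj}(D)\,u$ rather than the paper's explicit row reduction; this is a cleaner way to recognise $p_k$, but the two computations are otherwise equivalent.
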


\begin{proof} In this proof, for a matrix $A$, the $j$th row of $A$ is denoted as $A_j$ for all admissible $j$.
Notice that $S$ has at least $k-1$ zero rows. If $\alpha_1=0$, then $\rank(S)\leq (k-1)(p-1)$. Also, if $\alpha_i=0$ for some $i=2,3,\ldots,k$, then $\rank(S)\leq (k-2)(p-1)+p-2$. Suppose $\alpha_i\neq 0$ for $i=1,2,\ldots,k$. We have
\[S_1=\sum_{i=2}^{k} \frac{\alpha_1}{\alpha_i}S_{p(i-1)},\]
and the remaining nonzero rows form a basis for the row space. In this case, $\rank S=kp-2-k$ and the proof for part (i) is complete.

Now we suppose all the $\alpha_i$'s are nonzero. Argue inductively on $m$, we can work out the explicit matrix for $S^m$. We leave it to the reader to observe how $S^m$ look like while we have presented $S$, $S^{p-3}$ and $S^{p-2}$ here. We have
\[
S^{p-3}=\begin{pNiceMatrix}[first-row,first-col,small]
              &1 &2& \ldots &{p-2}&{p-1}&{p}  &{p+1}&\ldots &2p-2&\ldots&{kp-p-1}&{kp-p}
&{kp-p+1}   & \dots&{kp-2}\\
1       &        &       &          &            &             & &           &             &         &         &&                    &
&                     &          &              \\
\vdots    &        &       &          &            &             &&            &             &         &         &&                    &
&                     &          &              \\
{p-3} &  0      &       &          &            &\alpha_1^{p-3}&  &       &      &         &       \ldots  &\alpha_1^{p-3}&
&                     &          &              \\
{p-2} &\alpha_1^{p-3}&0&  &             &    0         &            &     &       &         &         &0&                    &
&                     &          &              \\
\hdottedline
{p-1} &&&  &             &             &            &     &       &         &         &&                    &
&                     &          &              \\
\vdots    &         &       &          &            &             &      &      &             &         &        &&                    &
&                     &          &              \\
{2p-4}&         &       &          &            &\alpha_2^{p-3}& &      &       &         &         &&                    &
&                     &          &              \\
{2p-3}&         &       &          &            &        0     &\alpha_2^{p-3}& &         &   &      &&                    &
&                     &          &              \\
{2p-2}&     0    &\alpha_1^{p-3}&&      &       0      &      0      &\alpha_2^{p-3}&&    &   &&                    &
&                     &          &              \\
\hdottedline
\vdots&&\vdots&&&&&&&&\ddots&&&&\\
\hdottedline
kp-p-1&&&&&&&&&&&&&&\\
\vdots&&&&&&&&&&&&&&\\
{kp-4}&         &       &          &            &             &           &   &             &         &                             &\alpha_k^{p-3}
&                     &          &              \\
{kp-3}&         &       &          &            &             &        &     &             &          &         &     0
&\alpha_k^{p-3}&          &              \\
{kp-2}&      0   &\alpha_1^{p-3}&&      &             &            &     &          &         &         &      0
&              0       &\alpha_k^{p-3}&              \\
\CodeAfter
\tikz \draw[dotted,color=black]  (1-|5) -- (16-|5);
\tikz \draw[dotted,color=black]  (1-|10) -- (16-|10);
\tikz \draw[dotted,color=black]  (1-|11) -- (16-|11);
\end{pNiceMatrix}\] where zero entries are deliberatedly left out. Notice that there are only $2+(k-1)3=3k-1$ nonzero rows with
\[S^{p-3}_{p-3}=\sum_{i=2}^{k} \left (\frac{\alpha_1}{\alpha_i}\right )^{p-3}S^{p-3}_{ip-4}.\]
Thus, $\rank(S^{p-3})=3k-2$. For $S^{p-2}$, we have
\[
S^{p-2}=\begin{pNiceMatrix}[first-row,first-col,small]
              &1 &2& \ldots &{p-2}&{p-1}&{p}  &\ldots &2p-2&\ldots&{kp-p-1}&{kp-p}
  & \dots&{kp-2}\\
1       &        &       &          &            &             & &           &             &         &         &&                    &
&                     &          &              \\
\vdots    &        &       &          &            &             &&            &             &         &         &&                    &
&                     &          &              \\
{p-3} &        &       &          &            &&  &       &      &         &       &&
&                     &          &              \\
{p-2} &&&  &             &    \alpha_1^{p-2}        &            &            &         &    \ldots     & \alpha_1^{p-2}     &                    &
&                     &          &              \\
\hdottedline
{p-1} &&&  &             &             &            &     &       &         &         &&                    &
&                     &          &              \\
\vdots    &         &       &          &            &             &      &      &             &         &        &&                    &
&                     &          &              \\
{2p-3}&       &       &          &            &        \alpha_2^{p-2}&& &         &   &      &&                    &
&                     &          &              \\
{2p-2}&     \alpha_1^{p-2}&&&      &       0           &\alpha_2^{p-2}&&  &  &   &&                    &
&                     &          &              \\
\hdottedline
\vdots&\vdots&&&&&&&&\ddots&&&&\\
\hdottedline
kp-p-1&&&&&&&&&&&&&&\\
\vdots&&&&&&&&&&&&&&\\

{kp-3}&         &       &          &            &             &             &             &          &
&\alpha_k^{p-2}&          &              \\
{kp-2}   &\alpha_1^{p-2}&&      &             &            &     &          &         &
&              0       &\alpha_k^{p-2}&              \\
\CodeAfter  
\tikz \draw[dotted,color=black]  (1-|5) -- (16-|5);
\tikz \draw[dotted,color=black]  (1-|9) -- (16-|9);
\tikz \draw[dotted,color=black]  (1-|10) -- (16-|10);
\end{pNiceMatrix}.\] There are only $1+(k-1)2=2k-1$ nonzero rows in $S^{p-2}$ with
\[S^{p-2}_{p-2}=\sum_{i=2}^{k} \left (\frac{\alpha_1}{\alpha_i}\right )^{p-2}S^{p-2}_{ip-3}.\]
Thus, $\rank(S^{p-2})=2k-2$ and the proof for part (ii) is complete.

We now prove part (iii). Using the matrix $S^{p-2}$, we obtain
\[
S^{p-1}=\begin{pNiceMatrix}[first-row,first-col,small]
              &\ldots &{p-1} &\ldots   &{2p-1}&\ldots &{kp-p-1} & \ldots&\\
\vdots    &          &             &            &             &          &            & \\
{2p-2}&         &\alpha_1^{p-1}+\alpha_2^{p-1}&&\alpha_1^{p-1}&\ldots&  \alpha_1^{p-1}         &\\
\vdots&&&&&&&\\
{3p-2}&        &\alpha_1^{p-1}& &\alpha_1^{p-1}+\alpha_3^{p-1}&\ldots&\alpha_1^{p-1}&\\
\vdots&&\vdots&&\vdots&\ddots&\vdots&\\
{kp-2}&          &\alpha_1^{p-1}& &\alpha_1^{p-1} &    \ldots    &\alpha_1^{p-1}+\alpha_k^{p-1}&        \\
\end{pNiceMatrix}\] where we now only highlight the possibly nonzero entries with their corresponding rows and columns. There are only $k-1$ nonzero rows in $S^{p-1}$ and $\rank (S^{p-1})=k-1$ if and only if this following $(k-1)\times (k-1)$-matrix has full rank:
\[B=\begin{pmatrix}
\alpha_1^{p-1}+\alpha_2^{p-1}&\alpha_1^{p-1}                         &\ldots         &\alpha_1^{p-1}\\
\alpha_1^{p-1}                         &\alpha_1^{p-1}+\alpha_3^{p-1}&\ldots         &\alpha_1^{p-1}\\
\vdots &\vdots&\ddots&\vdots\\
\alpha_1^{p-1}                         &\alpha_1^{p-1}                         &\ldots         &\alpha_1^{p-1}+\alpha_k^{p-1}\\
\end{pmatrix}.
\]
We claim that $\det B=p_k(\alpha_1,\ldots,\alpha_k)$ and therefore proving part (iii). Performing row operations, we have
\[\xymatrix{B\ar[r]^-T&{\begin{pmatrix}
\alpha_2^{p-1}&0                         &\ldots         &-\alpha_k^{p-1}\\
0                     &\alpha_3^{p-1}   &\ldots         &-\alpha_k^{p-1}\\
\vdots &\vdots&\ddots&\vdots\\
\alpha_1^{p-1}                         &\alpha_1^{p-1}                         &\ldots         &\alpha_1^{p-1}+\alpha_k^{p-1}
\end{pmatrix}=:B'}\ar[d]^-U\\
&
{B'':=\begin{pmatrix}
\alpha_2^{p-1}&0                         &\ldots         &-\alpha_k^{p-1}\\
0                     &\alpha_3^{p-1}    &\ldots         &-\alpha_k^{p-1}\\
\vdots &\vdots&\ddots&\vdots\\
0                     &0                         &\ldots         &\alpha_1^{p-1}+\alpha_k^{p-1}+\sum_{i=2}^{k-1}\frac{\alpha_1^{p-1}\alpha_k^{p-1}}{\alpha_i^{p-1}}
\end{pmatrix}}}\] where $T$ corresponds to the row operations of subtracting the $i$th row by the $(k-1)$th row, one for each $1\leq i\leq k-2$, and $U$ corresponds to the row operation $B'_{k-1}-\sum_{i=1}^{k-2}\left (\frac{\alpha_1}{\alpha_{i+1}}\right )^{p-1}B'_i$. Therefore
\[\det(B)=\det(B'')=\alpha_2^{p-1}\cdots\alpha_{k-1}^{p-1}\left (\alpha_1^{p-1}+\alpha_k^{p-1}+\sum_{i=2}^{k-1}\frac{\alpha_1^{p-1}\alpha_k^{p-1}}{\alpha_i^{p-1}}\right )=p_k(\alpha_1,\ldots,\alpha_k)\] as desired.
\end{proof}

We conclude the section with the following result concerning the generic Jordan type and maximal Jordan set of $D(1){\downarrow_{E_k}}$.

\begin{thm}\label{T: jtD}
The generic Jordan type of $D(1){\downarrow_{E_k}}$ is $[p-2][p]^{k-1}$ and it has maximal Jordan set $\U{E_k}{D(1)}$ the complement of
\[V(p_k)\cup\bigcup_{i=1}^k V(x_i).
\]
\end{thm}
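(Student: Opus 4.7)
The plan is to combine the explicit rank data of Lemma \ref{rank} with the standard fact that the Jordan type of a nilpotent operator is determined by the ranks of its powers, and then invoke Theorem \ref{T: generic eq maximal} to pass from the generic to the maximal Jordan type.

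First, for $\alpha$ in the open set $U := \A^k(\F)\setminus\bigl(V(p_k)\cup\bigcup_{i=1}^k V(x_i)\bigr)$, all hypotheses of Lemma \ref{rank} are in force, so with $S = [X_\alpha]_\B$ I read off $\rank(S) = kp-k-2$, $\rank(S^{p-3}) = 3k-2$, $\rank(S^{p-2}) = 2k-2$, $\rank(S^{p-1}) = k-1$, in addition to $\rank(S^0) = kp-2$. Writing the Jordan type as $[p]^{a_p}\cdots[1]^{a_1}$ and using $\rank(S^r) = \sum_{i>r}(i-r)a_i$, the last three rank values force $a_p = k-1$, $a_{p-1} = 0$ and $a_{p-2} = 1$. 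The total number of Jordan blocks $\sum_i a_i$ equals $(kp-2) - \rank(S) = k = a_{p-2}+a_{p-1}+a_p$, so $a_j = 0$ for every $1 \leq j \leq p-3$. Hence the Jordan type on $U$ is exactly $[p-2][p]^{k-1}$.

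Because $U$ is a nonempty Zariski open, hence dense, subset of the irreducible space $\A^k(\F)$, this is the generic Jordan type of $\res{D(1)}{E_k}$, and by Theorem \ref{T: generic eq maximal} it coincides with the maximal Jordan type. This gives the inclusion $U \subseteq \U{E_k}{D(1)}$.

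For the reverse inclusion I argue the contrapositive. Suppose $\alpha \in V(p_k)\cup\bigcup_i V(x_i)$. Lemma \ref{rank}(i) shows that if some $\alpha_i=0$ then $\rank(S) < kp-k-2$, while Lemma \ref{rank}(iii) shows that if all $\alpha_i\neq 0$ but $p_k(\alpha)=0$ then $\rank(S^{p-1}) < k-1$. Either way, at least one power of $S$ drops rank relative to the generic case, so the Jordan type of $S$ cannot equal $[p-2][p]^{k-1}$, and thus $\alpha\notin \U{E_k}{D(1)}$. This yields the second assertion. The main obstacle has already been handled by the lengthy computation in Lemma \ref{rank}; what remains is the brief linear-algebra check that the five rank values at $r=0,1,p-3,p-2,p-1$ suffice to pin down the partition, together with the semi-continuity observation that any strict drop in one of these ranks forces a different, necessarily dominated, Jordan type.
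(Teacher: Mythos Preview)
Your proof is correct and follows essentially the paper's own route: read off the ranks of $S,S^{p-3},S^{p-2},S^{p-1}$ from Lemma~\ref{rank}, solve for the $a_i$'s to obtain Jordan type $[p-2][p]^{k-1}$ on the open set $U$, and observe that on the complement at least one of these ranks drops, so the Jordan type there is strictly dominated. The one omission is the case $k=1$, where Lemma~\ref{rank} does not apply; there $[X_\alpha]_\B=S(1,1)$ is just the single $(p-2)\times(p-2)$ block with $\alpha_1$ on the subdiagonal, so the Jordan type is $[p-2]$ exactly when $\alpha_1\neq 0$, a one-line check you should add.
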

\begin{proof} If $k=1$, $[X_\alpha]_\B=S(1,1)$ has rank $p-3$ for all $\alpha_1\neq 0$. In this case, the (generic) Jordan type of $D(1){\downarrow_{E_1}}$ is $[p-2]$. Also, $p_1=1$ by our convention and its maximal Jordan set is precisely $\F\backslash\{0\}$. Suppose now that $k\geq 2$. Let $M=D(1){\downarrow_{E_k}}$ and $W=V(p_k)\cup\bigcup_{i=1}^k V(x_i)$. For $0\neq\alpha\in \A^k(\F)$, let $S=[X_\alpha]_\B$. Suppose that the Jordan type of $S$ is given by $[p]^{a_p} [p-1]^{a_{p-1}}\ldots [1]^{a_1}$. By Lemma \ref{rank}, $a_p\leq k-1$ and $a_p=k-1$ if and only if $\alpha_i\neq 0$ for all $i$ and $p_k(\alpha_1,\alpha_2,\ldots,\alpha_k)\neq 0$, i.e., $\alpha\not\in W$. In this case, given $\rank(S^{p-3})=3k-2$ and $\rank(S^{p-2})=2k-2$, we obtain that $a_{p-2}+2a_{p-1}+3a_p=3k-2$ and $a_{p-1}+2a_p=2k-2$. Thus, $a_{p-2}=1$ and $a_{p-1}=0$.  Therefore the Jordan type of $[X_\alpha]_\B$ is $[p-2][p]^{k-1}$ as $\dim M=kp-2$. Furthermore, $[p-2][p]^{k-1}$ is the most dominant Jordan type among all Jordan types of modules of dimension $kp-2$. As such, the maximal and generic Jordan type of $M$ is $[p-2][p]^{k-1}$ and $\U{E}{M}$ is the complement of $W$.
\end{proof}

\begin{rem} When $p=2$ and $k>1$, our calculation shows that the generic Jordan type of $D(1){\downarrow_{E_k}}$ is $[2]^{k-1}$ and it has maximal Jordan set $\U{E_k}{D(1)}$ the complement of $V(p_k)$. This coincides with \cite{Jiang21} using Corollary \ref{C: complement}.
\end{rem}

\section{The simple module $D(p-1)$}
As the setting in the previous section, we assume $p\geq 3$, fix an integer $k\geq 2$ and let $n=kp$. Again, $E_k$ is the elementary abelian $p$-subgroup of $\sym{kp}$ generated by $k$ disjoint $p$-cycles and $p_k$ is the polynomial in Equation \ref{Eq: pk}. In this section, we shall make use of the fact that $D(p-1)$ is the $(p-1)$th exterior power of $D(1)$ to compute the rank variety for $\res{D(p-1)}{E_k}$ when $k\not\equiv 1(\mod p)$. We should like to mention that the rank variety should also be $V(p_k)$ for the case $k\equiv 1(\mod p)$ but our method unfortunately does not apply.


\begin{lem}\label{jtDp-1}
The generic Jordan type of $\res{D(p-1)}{E_k}$ is $[p]^m$ where $mp=\binom{kp-2}{p-1}$.
\end{lem}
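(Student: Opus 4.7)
The plan is to combine Theorem~\ref{T: jtD} with Proposition~\ref{P: Umax}. The former tells us that the generic Jordan type of $\res{D(1)}{E_k}$ is $[p-2][p]^{k-1}$, so by the latter the generic Jordan type of $\res{D(p-1)}{E_k} = \bigwedge^{p-1}\!\res{D(1)}{E_k}$ is the Jordan type of the $\F C_p$-module $\bigwedge^{p-1}\bigl(J_{p-2} \oplus J_p^{\oplus(k-1)}\bigr)$, where $C_p$ is viewed as a generic shifted subgroup of $E_k$. Since this module has dimension $\binom{kp-2}{p-1}$, the claim reduces to showing that it is free (a direct sum of copies of $J_p$).

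I would then expand using the exterior-power decomposition of a direct sum,
\[
\bigwedge^{p-1}\bigl(J_{p-2} \oplus J_p^{\oplus(k-1)}\bigr) \;\cong\; \bigoplus_{\substack{i+j=p-1\\ 0\le i\le p-2}} \bigwedge\nolimits^{\!i}(J_{p-2}) \otimes \bigwedge\nolimits^{\!j}\bigl(J_p^{\oplus(k-1)}\bigr).
\]
Because $\dim J_{p-2} = p-2 < p-1$, the would-be summand with $i=p-1$ vanishes, so every nonzero summand has $j\ge 1$. Using the standard fact that over $\F C_p$ the tensor product of any module with a projective module is again projective, it therefore suffices to prove that $\bigwedge^j(J_p^{\oplus(k-1)})$ is projective for $1 \le j \le p-1$.

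For this final step, the key observation is that $j!$ is invertible in $\F$ as long as $1 \le j \le p-1$, so the antisymmetrizer $\frac{1}{j!}\sum_{\sigma \in \sym{j}} \sgn(\sigma)\sigma$ acts as an $\F C_p$-linear idempotent on $(J_p^{\oplus(k-1)})^{\otimes j}$ whose image is isomorphic to $\bigwedge^j(J_p^{\oplus(k-1)})$. Hence the exterior power is a direct summand of the tensor power. That tensor power is a direct sum of terms of the form $J_p \otimes \cdots \otimes J_p$, each of which is projective because $J_p \cong \F C_p$ and $\F C_p \otimes M$ is free for every finite-dimensional $\F C_p$-module $M$. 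Therefore $\bigwedge^j(J_p^{\oplus(k-1)})$ is projective, and the conclusion follows upon comparing dimensions, yielding $mp = \binom{kp-2}{p-1}$. The only non-routine step in this plan is the projectivity of the exterior power $\bigwedge^j(J_p^{\oplus(k-1)})$; the rest is bookkeeping through the decomposition formula and the fact that $J_{p-2}$ is too small to contribute the term $i = p-1$.
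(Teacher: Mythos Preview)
Your argument is correct and follows essentially the same route as the paper: invoke Theorem~\ref{T: jtD} and Proposition~\ref{P: Umax}, decompose $\bigwedge^{p-1}(J_{p-2}\oplus J_p^{\oplus(k-1)})$ as a direct sum of $\bigwedge^i(J_{p-2})\otimes\bigwedge^{p-1-i}(J_p^{\oplus(k-1)})$, observe that the term with no $J_p$-factor vanishes since $\dim J_{p-2}<p-1$, and conclude by freeness of the remaining summands. The only difference is that the paper simply asserts that $\bigwedge^i(J_p^{\oplus(k-1)})$ is free for $i\geq 1$, whereas you supply a justification via the antisymmetrizer idempotent (valid since $i!\in\F^\times$ for $i\leq p-1$); this extra detail is fine but not strictly necessary.
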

\begin{proof} It is well-known that, for $\F G$-modules $M$ and $N$, we have
\[\bigwedge^m(M\oplus N)\cong \bigoplus^m_{i=0} \bigwedge^i(M)\otimes\bigwedge^{m-i} (N).\] By  Proposition \ref{P: Umax}, we conclude that the generic Jordan type of $\res{D(p-1)}{E_k}\cong \res{\left (\bigwedge^{p-1} D(1)\right )}{E_k}$ is the $(p-1)$th exterior power of the generic Jordan type of $D(1)$.  By Theorem \ref{T: jtD}, the generic Jordan type of $D(1)$ is $[p-2][p]^{k-1}$. Notice that
\begin{align*}
N:=\bigwedge^{p-1}(J_{p-2}\oplus J_p^{\oplus k-1})\cong \bigoplus^{p-1}_{i=0} \bigwedge^{p-1-i} J_{p-2}\otimes \bigwedge^{i}(J_p^{\oplus k-1}).
\end{align*}
Unless $i=0$, $\bigwedge^{i}(J_p^{\oplus k-1})$ is free. However, when $i=0$, $\bigwedge^{p-1} J_{p-2}$ is the zero module. Therefore, $N$ is free. Since the dimension of $D(p-1)$ is $\binom{kp-2}{p-1}$, we get the number $m$ as desired. The proof is now complete.
\end{proof}

Since the generic Jordan type of $\res{D(p-1)}{E_k}$ is free, by Corollary \ref{C: complement}, we have $V^\#_{E_k}(D(p-1))=(\U{E_k}{D(p-1)}^c$. By Theorem \ref{T: jtD} and Proposition \ref{P: Umax}, we conclude that
\begin{equation}\label{subset}
V_{E_k}^\#(D(p-1))\subseteq V(p_k)\cup\bigcup_{i=1}^k V(x_i).
\end{equation} More precisely, we have the following lemma. In the proof, we need the modular branching rule. Since we do not require it elsewhere, we refer the readers to \cite{kleshchev} {for the necessary details}.

\begin{lem}\label{subseteq} $V_{E_k}^\#(D(p-1))\subseteq V(p_k)$.
\end{lem}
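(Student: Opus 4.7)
The plan is to start from Equation~\ref{subset} and eliminate the contribution of the hyperplanes $V(x_i)$ outside $V(p_k)$. First, any $\alpha\in\A^k(\F)$ with at least two vanishing coordinates automatically lies in $V(p_k)$, since each summand of $p_k$ is a product omitting exactly one of the $\alpha_i$. Consequently, it suffices to prove that $\alpha\notin V^\#_{E_k}(D(p-1))$ whenever $\alpha$ has exactly one zero coordinate, and by Lemma~\ref{L: symmetry} we may assume $\alpha_k=0$ with $\alpha_1,\dots,\alpha_{k-1}$ all nonzero. Then $u_\alpha\in\F E_{k-1}$ and, since exterior powers commute with restriction, $\res{D(p-1)}{E_{k-1}}\cong \bigwedge^{p-1}\res{D(1)}{E_{k-1}}$ as $\F E_{k-1}$-modules; the task reduces to showing that $X_\alpha$ acts with free Jordan type on this module.

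The decisive step is to compute the generic Jordan type of $\res{D(1)}{E_{k-1}}$. The subgroup $E_{k-1}$ has $k-1$ cyclic orbits of size $p$ on $\{1,\dots,kp\}$ together with $p$ fixed points $\{(k-1)p+1,\dots,kp\}$, so the permutation module $M:=\mathrm{span}\{\mathfrak{t}_1,\dots,\mathfrak{t}_{kp}\}$ decomposes as
\[
\res{M}{E_{k-1}}\cong \bigoplus_{j=1}^{k-1}P_j\oplus \F^p,
\]
where $P_j$ is the regular $\F\langle g_j\rangle$-module inflated to $E_{k-1}$. For $\beta=(\alpha_1,\dots,\alpha_{k-1})$ with all components nonzero, $X_\beta$ acts on each $P_j$ as a full Jordan block of size $p$ and trivially on the $p$ fixed summands, giving Jordan type $[p]^{k-1}[1]^p$. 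I will then split both short exact sequences
\[
0\to S^{(kp-1,1)}\to M\to \F\to 0\quad\text{and}\quad 0\to \F\to S^{(kp-1,1)}\to D(1)\to 0
\]
as $\F E_{k-1}$-modules: the first via the trivial summand $\F\mathfrak{t}_{kp}$ (which has augmentation $1$), and the second via the $E_{k-1}$-invariant complement $\{m\in S^{(kp-1,1)}:\text{the coefficient of }\mathfrak{t}_{kp}\text{ in }m\text{ equals }0\}$ to $\F\cdot\sum_i\mathfrak{t}_i$. Removing two $[1]$-blocks yields generic Jordan type $[p]^{k-1}[1]^{p-2}$ on $\res{D(1)}{E_{k-1}}$.

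By Proposition~\ref{P: Umax}, the generic Jordan type of $\res{D(p-1)}{E_{k-1}}$ is $\bigwedge^{p-1}([p]^{k-1}[1]^{p-2})$. Writing this as $\bigwedge^{p-1}(V\oplus W)$ with $V=J_p^{\oplus(k-1)}$ and $W=J_1^{\oplus(p-2)}$ and expanding
\[
\bigwedge^{p-1}(V\oplus W)=\bigoplus_{i=0}^{p-1}\bigwedge^iV\otimes\bigwedge^{p-1-i}W,
\]
the $i=0$ term vanishes because $\bigwedge^{p-1}W=0$, while for $i\geq 1$ each nonzero summand of $\bigwedge^iV$ contains a factor $\bigwedge^{i_\ell}J_p$ with $1\leq i_\ell\leq p-1$. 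A direct cyclic-orbit count on the standard basis of $\bigwedge^{i_\ell}J_p$ shows that each such factor is a free $\F C_p$-module, and tensoring a free $\F C_p$-module with any $\F C_p$-module yields a free module by self-injectivity. Hence the generic Jordan type of $\res{D(p-1)}{E_{k-1}}$ is free.

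Combining these observations, Proposition~\ref{P: Umax} gives $\U{E_{k-1}}{\res{D(1)}{E_{k-1}}}\subseteq \U{E_{k-1}}{\res{D(p-1)}{E_{k-1}}}$, while Corollary~\ref{C: complement} identifies $V^\#_{E_{k-1}}(\res{D(p-1)}{E_{k-1}})$ with the complement of the latter. Since $\beta$ achieves the generic Jordan type on $\res{D(1)}{E_{k-1}}$, it lies in both maximal Jordan sets, and hence $(\alpha_1,\dots,\alpha_{k-1},0)\notin V^\#_{E_k}(D(p-1))$; Lemma~\ref{L: symmetry} then extends this conclusion to the other coordinates. The main obstacle will be the explicit verification that the two defining short exact sequences of $D(1)$ split as $\F E_{k-1}$-modules, which hinges on identifying the correct trivial direct summands keyed to the fixed letters $\{(k-1)p+1,\dots,kp\}$.
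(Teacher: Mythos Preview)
Your proposal is correct and takes a genuinely different route from the paper.

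Both arguments share the opening reduction: start from Equation~\ref{subset}, observe that any $\alpha$ with at least two vanishing coordinates already satisfies $p_k(\alpha)=0$, and use Lemma~\ref{L: symmetry} to reduce to showing that $\alpha=(\alpha_1,\dots,\alpha_{k-1},0)$ with all $\alpha_j\neq 0$ lies outside $V^\#_{E_k}(D(p-1))$. The divergence is in how this last claim is established. The paper restricts $D(p-1)$ to $\sym{kp-1}$, invokes the modular branching rule to identify it with $D^{(kp-p,1^{p-1})}\cong S^{(kp-p,1^{p-1})}$, then uses Wildon's vertex theorem together with Mackey's formula and Lemma~\ref{L: rkinduction} to force a \emph{second} zero coordinate --- a contradiction. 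You instead work entirely at the level of $E_{k-1}$: using that the last $p$ letters are fixed by $E_{k-1}$, you split both defining short exact sequences of $D(1)$ over $\F E_{k-1}$ to obtain $\res{D(1)}{E_{k-1}}\cong P_1\oplus\cdots\oplus P_{k-1}\oplus\F^{p-2}$, read off the generic Jordan type $[p]^{k-1}[1]^{p-2}$, and then, exactly as in the proof of Lemma~\ref{jtDp-1}, show that its $(p-1)$th exterior power is free (the $i=0$ summand vanishes because $\bigwedge^{p-1}\F^{p-2}=0$, and every other summand has a free tensor factor $\bigwedge^{i_\ell}J_p$ with $1\le i_\ell\le p-1$). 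Proposition~\ref{P: Umax} and Corollary~\ref{C: complement} then finish the job.

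What each approach buys: yours is more elementary and fully self-contained within the paper's own toolkit, avoiding the external appeals to \cite{kleshchev}, \cite{Peel}, \cite{James78} and \cite{Wildon10}; it is essentially the $E_{k-1}$-analogue of the calculation in Lemma~\ref{jtDp-1}, with the key new input being the explicit splittings afforded by the fixed letters $(k-1)p+1,\dots,kp$. The paper's route, by contrast, packages the freeness over $\langle u_\alpha\rangle$ inside the vertex--source information for $S^{(kp-p,1^{p-1})}$, which is structurally informative but relies on heavier machinery. A small remark on your write-up: once you have the module decomposition, you could bypass Proposition~\ref{P: Umax} and Corollary~\ref{C: complement} altogether and simply observe that the Jordan type of $X_\alpha$ on $\bigwedge^{p-1}\res{D(1)}{E_{k-1}}$ is $\bigwedge^{p-1}$ of its Jordan type on $\res{D(1)}{E_{k-1}}$, which you have already computed to be free; this makes the argument slightly more direct. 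Also, for the freeness of $\bigwedge^{i_\ell}J_p$ with $1\le i_\ell\le p-1$, a cleaner alternative to the orbit count is to note that $\bigwedge^{i_\ell}J_p$ is a direct summand of $J_p^{\otimes i_\ell}$ (since $i_\ell!<p$ is invertible), and the latter is free because $J_p=\F C_p$.
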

\begin{proof}
Given Equation \ref{subset}, we just need to show that, for any $1\leq i\leq k$,
\begin{equation*}
V_{E_k}^\#(D(p-1))\cap V(x_i)\subseteq \bigcup_{j\neq i} V(x_j,x_i) \subseteq V(p_k).
\end{equation*}
The second inclusion is clear since, if any two of the coordinates of a point are $0$, then $p_k$ vanishes at this point. Showing the first inclusion is equivalent to showing that, for any $\alpha=(\alpha_1,\dots,\alpha_k)\in V_{E_k}^\#(D(p-1))$ with $\alpha_i=0$ for some $i$, there exists $j\neq i$ such that $\alpha_j=0$. Let $\alpha$ be such a point. By Lemma \ref{L: symmetry}, we can further assume that $i=k$ and let $\alpha'=(\alpha_1,\ldots,\alpha_{k-1})$. Since \[u_\alpha=u_{\alpha'}=1+\sum_{i=1}^{k-1}\alpha_i(g_i-1)\in \F\sym{kp-1},\] we have $\res{D(p-1)}{\langle u_\alpha\rangle}=\res{(\res{D(p-1)}{\sym{kp-1}})}{\langle u_{\alpha'}\rangle}$ and therefore $\alpha'\in V^\#_{E_{k-1}}(D(p-1))$. By the modular branching rule \cite[Theorem 11.2.7]{kleshchev}, since the only one good node in $(kp-p+1,1^{p-1})$ is the $(1,kp-p+1)$-node (see the diagram below indicating the necessary residues and the good node), we have $\res{D(p-1)}{\sym{kp-1}}\cong D^{(kp-p,1^{p-1})}$. \[
\begin{ytableau}
0&1&  \none[\dots]&\scriptstyle p-1&0&  \none[\dots]&\scriptstyle p-1&*(yellow)0&\none[1]
\cr \scriptstyle p-1 &\none[0]
\cr \scriptstyle p-2
\cr \none[\vdots]
\cr 1
\cr \none [0]
\end{ytableau}\] Since $(kp-p,1^{p-1})$ is a hook of size coprime to $p$, by \cite[Theorem 2]{Peel} and \cite[Theorem 11.1]{James78}, we have $D^{(kp-p,1^{p-1})}\cong S^{(kp-p,1^{p-1})}$. By \cite[Theorem 7.3]{Wildon10}, $S^{(kp-p,1^{p-1})}$ has a vertex a Sylow $p$-subgroup $P$ of $\sym{kp-2p}$ and has a trivial $\F P$-source. Therefore, using Mackey's formula, \[\res{D(p-1)}{E_{k-1}}\mid\res{\ind{\F_P}{\sym{kp-1}}}{E_{k-1}}\cong\bigoplus_{s\in E_{k-1}\backslash \sym{kp-1}/P} \ind{\res{{}^s\F_P}{E_{k-1}\cap sPs^{-1}}}{E_{k-1}}
\] where each $E_{k-1}\cap sPs^{-1}$ is a proper subgroup of $E_{k-1}$. Consequently, we have:
\[\alpha'\in V_{E_{k-1}}^\#(D(p-1))\subseteq \bigcup_{s\in E_{k-1}\backslash \sym{kp-1}/P} V_{E_{k-1}}^\#(\ind{\res{{}^s\F_P}{E_{k-1}\cap sPs^{-1}}}{E_{k-1}}),
\]
and there exist some $s\in E_{k-1}\backslash \sym{kp-1}/P$ such that $\alpha' \in V_{E_{k-1}}^\#(\ind{\res{{}^s\F_P}{E_{k-1}\cap sPs^{-1}}}{E_{k-1}})$. Let $E'=E_{k-1}\cap sPs^{-1}$. Since $P\leq \sym{(k-2)p}$, we have $sPs^{-1} \leq\sym{\{s(1),s(2),\dots,s((k-2)p)\}}$. Therefore $sPs^{-1}$ permutes only $(k-2)p$ numbers and $E'$ is properly contained in $E_{k-1}$. We claim that there exist $1\leq j\leq k-1$ such that, for any $\prod_{i=1}^{k-1} g_i^{q_i}\in E'$, $q_j=0$. If not, then for any $1\leq j\leq k-1$, there exists, some $\prod_{i=1}^{k-1} g_i^{q_i}\in E'$ such that $q_j\neq 0$. This shows that $E'$ permutes all $(k-1)p$ numbers, which is a contradiction. Since $E'$ avoids the generator $g_j$ completely, by Lemma \ref{L: rkinduction}, we have $w_{ij}=0$ for $1\leq i\leq k-1$ in that statement, i.e. $\alpha_j=0$. The proof is now complete.
\end{proof}

Now we are ready to state and prove the main theorem in this section.

\begin{thm}\label{T: main thm}
If $k\not\equiv 1 (\mod p)$, then
\[V_{E_k}^\#(D(p-1))=V(p_k).\]
\end{thm}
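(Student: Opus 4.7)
The plan is to build on Lemma \ref{subseteq}, which already supplies the inclusion $V_{E_k}^\#(D(p-1)) \subseteq V(p_k)$, and establish the reverse containment via a dimension count. Let $r = \dim V_{E_k}^\#(D(p-1))$; the goal is to prove $r = k - 1$, after which the conclusion follows by irreducibility in the case $k \geq 3$ and by symmetry in the case $k = 2$. The upper bound $r \leq k - 1$ is immediate from Lemma \ref{subseteq} together with Lemma \ref{irred} (and the observation that, for $k = 2$, $V(p_2)$ is still $1$-dimensional as a union of lines). For the lower bound, Theorem \ref{T: dimension} gives $p^{k - r} \mid \dim_\F D(p - 1) = \binom{kp - 2}{p - 1}$.

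The first computation I would carry out is the $p$-adic valuation $v_p\binom{kp - 2}{p - 1} = 1$ under the hypothesis $k \not\equiv 1 \pmod p$. By Kummer's theorem this equals the number of carries in the base-$p$ addition $(p - 1) + (kp - p - 1) = kp - 2$. Writing $k - 1 = \sum_{i \geq 0} c_i p^i$ with $0 \leq c_i < p$, the hypothesis forces $c_0 \geq 1$, so the base-$p$ digits of $(k - 1)p - 1$ are $(p - 1, c_0 - 1, c_1, c_2, \dots)$; adding to $(p - 1, 0, 0, \dots)$ produces exactly one carry, at the units place. Hence $k - r \leq 1$, forcing $r = k - 1$.

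Once $r = k - 1$ is established, the case $k \geq 3$ is immediate: Lemma \ref{irred} asserts that $V(p_k)$ is irreducible of dimension $k - 1$, so Theorem \ref{T: irred variety dim} applied to $V_{E_k}^\#(D(p - 1)) \subseteq V(p_k)$ yields the desired equality.

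The main obstacle, and the reason the $k = 2$ case must be argued separately, is that $V(p_2)$ is reducible: it decomposes as a union of $p - 1$ lines $L_\lambda = \{(\lambda t, t) : t \in \F\}$ indexed by $\lambda^{p - 1} = -1$. My plan here is to exploit the $\F_p^\times \wr \sym{2}$-action from Lemma \ref{L: symmetry}: scaling by $\gamma \in \F_p^\times$ in the first coordinate sends $L_\lambda$ to $L_{\gamma \lambda}$, and since $\{\lambda : \lambda^{p - 1} = -1\}$ is a single $\F_p^\times$-coset, all $p - 1$ lines lie in a single orbit. Since $V_{E_2}^\#(D(p - 1))$ is a closed, homogeneous, invariant subvariety of $V(p_2)$ of dimension $1$, it must be a union of some of the $L_\lambda$; invariance under the transitive action then forces it to contain all of them, and hence to coincide with $V(p_2)$.
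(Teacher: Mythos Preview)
Your proposal is correct and follows essentially the same route as the paper's proof: inclusion from Lemma~\ref{subseteq}, the dimension bound $r=k-1$ via Theorem~\ref{T: dimension} and the fact that $p\parallel\binom{kp-2}{p-1}$, then irreducibility (Lemma~\ref{irred} and Theorem~\ref{T: irred variety dim}) for $k\ge3$, and the $\F_p^\times\wr\sym{2}$-symmetry for $k=2$. The only cosmetic difference is that the paper obtains $v_p\binom{kp-2}{p-1}=1$ by the elementary factorisation $\binom{kp-2}{p-1}=\frac{(kp-2)\cdots((k-1)p+1)}{(p-1)!}\cdot(k-1)p$ rather than via Kummer's theorem.
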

\begin{proof} From Lemma \ref{subseteq}, we knew that $W:=V_{E_k}^\#(D(p-1))\subseteq V(p_k)$ and \begin{align*}
\dim_\F D(p-1)=\binom{kp-2}{p-1}&=\frac{(kp-2)(kp-3)\ldots(k-1)p}{(p-1)!}\\
&=\frac{(kp-2)(kp-3)\ldots((k-1)p+1)}{(p-1)!}\cdot (k-1)p.
\end{align*}
Since $k\not \equiv 1(\mod p)$, $p$ divides $\dim_\F {D(p-1)}$ but $p^2$ does not. Let $r=\dim W$. By Theorem \ref{T: dimension}, we have $p^{k-r}\mid \dim_\F D(p-1)$ and so $r\geq k-1$. On the other hand, $\dim {V(p_k)}= k-1$. Therefore $r=k-1$. When $k\geq 3$, by Lemma \ref{irred}, $V(p_k)$ is irreducible and we must have $W= V(p_k)$ by Theorem \ref{T: irred variety dim}.

Suppose now that $k=2$. Notice that $V(p_2)$ is reducible and we have a factorisation
\[X^{p-1}+Y^{p-1}=\prod_{r\in I} (X-rY),
\]
where $I$ is the set of all $(p-1)$th roots of $-1$ in $\F$. We have $V(p_2)=\bigcup_{r\in I}V(X-rY)$ and this gives the irreducible components of $V(p_2)$. On the other hand, $\dim W=1=\dim V$. Then $W=\bigcup_{r\in I'} V(X-rY)$ for some $\emptyset\neq I'\subseteq I$. Let $\lambda$ be a primitive $(2p-2)$th root of 1 in $\F$ where $\langle\lambda\rangle$ is a subgroup of $\F_{p^2}^\times$. We have $I=\{\lambda^1,\lambda^3,\dots,\lambda^{2p-3}\}$ and $\{\lambda^2,\lambda^4,\dots,\lambda^{2p-2}\}=\F_p^{\times}$. Let $r'=\lambda^{2i+1}\in I'$. By Lemma \ref{L: symmetry}, for any $r=\lambda^{2j+1}\in I$, we have \[(r,1)=(\lambda^{2j-2i}\lambda^{2i+1},1)=\lambda^{2j-2i}\cdot (r',1)\in W\] where we have considered $\lambda^{2j-2i}$ belongs to the first component in $\F_p^\times\wr \sym{2}$. As such, $I'=I$ and we have $W=V(p_2)$ as required.
\end{proof}

\begin{rem} The only obstruction for applying the proof of Theorem \ref{T: main thm} to the remaining case; namely, $k\equiv 1(\mod p)$, is that we could not deduce that $\dim V^\#_{E_k}(D(p-1))=k-1$. {If the dimension were indeed $k-1$, the rest of the proof follows. Also, Corollaries \ref{C: up bound pk}, \ref{C: variety kp-p-1} and \ref{C: comp} below would also follow at once. }
\end{rem}

We conclude the paper with corollaries following our result.

\begin{cor}\label{C: up bound pk} For $k\not\equiv 1(\mod p)$, we have $d_{V(p_k)}\leq \binom{kp-2}{p-1}$.
\end{cor}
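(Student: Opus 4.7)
The plan is to read this off directly from Theorem \ref{T: main thm} and the definition of $d_V$. By definition, $d_{V(p_k)}$ is the minimum of $\dim_\F M$ taken over $\F E_k$-modules $M$ with rank variety isomorphic to $V(p_k)$, so it suffices to exhibit a single such module achieving the stated dimension.

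The natural candidate is $\res{D(p-1)}{E_k}$. First, I would recall from the discussion in the introduction and \S\ref{SS: sym} that $\dim_\F D(p-1) = \binom{kp-2}{p-1}$. Next, Theorem \ref{T: main thm} shows that under the hypothesis $k \not\equiv 1 \pmod p$, we have $V^\#_{E_k}(D(p-1)) = V(p_k)$. Combining these two facts with the definition of $d_{V(p_k)}$ yields the inequality $d_{V(p_k)} \leq \binom{kp-2}{p-1}$ at once.

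There is essentially no obstacle here: the corollary is a pure bookkeeping consequence of Theorem \ref{T: main thm} together with the known dimension formula for $D(p-1)$. The only thing to be slightly careful about is matching the rank variety $V(p_k) \subseteq \A^k(\F)$ with the ambient space $\A^k(\F)$ attached to the elementary abelian $p$-group $E_k$ of rank $k$, which is exactly the setting of Theorem \ref{T: main thm}.
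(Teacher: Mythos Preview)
Your proposal is correct and matches the paper's intent: the corollary is stated immediately after Theorem \ref{T: main thm} with no proof, precisely because it follows at once from that theorem, the dimension formula $\dim_\F D(p-1)=\binom{kp-2}{p-1}$, and the definition of $d_V$.
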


\begin{cor}\label{C: variety kp-p-1} If $k\not\equiv 1(\mod p)$, then $V^\#_{E_k}(D(kp-p-1))=V(p_k)$.
\end{cor}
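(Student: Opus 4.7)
The plan is to reduce Corollary \ref{C: variety kp-p-1} to Theorem \ref{T: main thm} via the standard exterior-power duality. Let $N=\dim_\F D(1)=kp-2$. Since $(kp-p-1)+(p-1)=N$, the wedge product gives a perfect $\F\sym{kp}$-equivariant pairing
\[\bigwedge^{p-1} D(1)\otimes \bigwedge^{kp-p-1} D(1)\longrightarrow \bigwedge^{kp-2} D(1),\]
from which I would deduce the $\F\sym{kp}$-module isomorphism
\[D(kp-p-1)\cong D(p-1)^*\otimes L,\qquad \text{where } L:=\bigwedge^{kp-2} D(1).\]

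Next, since $L$ is one-dimensional and $E_k$ is a $p$-group, the restriction $\res{L}{E_k}$ must be trivial: the only one-dimensional $\F E_k$-module in characteristic $p$ is $\F$. Therefore $\res{D(kp-p-1)}{E_k}\cong \res{D(p-1)^*}{E_k}$. I would then invoke duality invariance of the rank variety, namely $V^\#_{E_k}(M)=V^\#_{E_k}(M^*)$ for any $\F E_k$-module $M$. This holds because, for each $0\neq \alpha$, the algebra $\F\langle u_\alpha\rangle\cong \F C_p$ is self-injective (Frobenius), so $\res{M}{\langle u_\alpha\rangle}$ is free if and only if its $\F$-linear dual is. Combining with Theorem \ref{T: main thm}, one concludes
\[V^\#_{E_k}(D(kp-p-1))=V^\#_{E_k}(D(p-1)^*)=V^\#_{E_k}(D(p-1))=V(p_k).\]

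There is no real obstacle here: each ingredient (exterior-power duality, triviality of one-dimensional $\F$-representations of a $p$-group, duality invariance of the rank variety) is standard, and the proof is essentially a one-line composition of these facts together with the main theorem. The only subtlety worth spelling out in the write-up is the $\F\sym{kp}$-equivariance of the exterior-duality isomorphism, which follows from functoriality of $\bigwedge^\bullet$.
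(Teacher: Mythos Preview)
Your proof is correct and takes a genuinely different, somewhat more self-contained route than the paper's. The paper obtains the identification $D(kp-p-1)\cong D(p-1)\otimes\sgn$ by invoking James's isomorphism $S^\lambda\otimes\sgn\cong (S^{\lambda'})^*$ and tracking heads and socles of the relevant hook Specht modules; it then applies the tensor-product formula $V^\#_{E_k}(M\otimes N)=V^\#_{E_k}(M)\cap V^\#_{E_k}(N)$ from Theorem~\ref{T: basic rank}(iii), using that the one-dimensional module $\sgn$ has full rank variety. You instead use the perfect pairing $\bigwedge^{p-1}D(1)\otimes\bigwedge^{kp-p-1}D(1)\to\bigwedge^{kp-2}D(1)$ to get $D(kp-p-1)\cong D(p-1)^*\otimes L$ with $L$ one-dimensional, observe that $\res{L}{E_k}$ is forced to be trivial since $E_k$ is a $p$-group, and then appeal to the duality invariance $V^\#_{E_k}(M^*)=V^\#_{E_k}(M)$. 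Your argument avoids any Specht-module input and stays entirely within the exterior-power framework already set up in the paper; the paper's argument, on the other hand, yields the extra structural information that the one-dimensional twist is exactly $\sgn$ (equivalently, that $\bigwedge^{kp-2}D(1)\cong\sgn$ up to the self-duality of $D(p-1)$).
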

\begin{proof} By \cite[Theorem 8.15]{James78}, $S^\lambda\otimes \sgn\cong (S^{\lambda'})^*$ where $\lambda'$ is the conjugate partition of $\lambda$. Since $D(p-1)$ appears as the head of $S^{(kp-p+1,1^{p-1})}$ and $D(kp-p-1)$ appears as the socle of $S^{(p,1^{kp-p})}$, we have $D(p-1)\otimes \sgn\cong D(kp-p-1)$. Therefore, by Theorem \ref{T: basic rank}(iii), the restriction of both modules $D(p-1)$ and $D(kp-p-1)$ to $E_k$ have the same rank variety.
\end{proof}

\begin{cor}\label{C: comp} Suppose that $k\not\equiv 1 (\mod p)$. The complexities of the simple modules $D(p-1)$ and $D(kp-p-1)$ are $k-1$.
\end{cor}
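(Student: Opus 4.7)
The plan is to translate the complexity question directly into a dimension computation for the rank variety, exploiting Theorem \ref{T: basic rank}(ii) together with Theorem \ref{T: main thm}. First I would compute $c_{E_k}(\res{D(p-1)}{E_k})=\dim V^\#_{E_k}(D(p-1))=\dim V(p_k)$. For $k\geq 3$ the value $k-1$ is immediate from Lemma \ref{irred}, while for $k=2$ the polynomial $p_2=x_1^{p-1}+x_2^{p-1}$ factors as a product of $p-1$ distinct linear forms, so $V(p_2)$ is a union of lines through the origin in $\A^2(\F)$ and still has dimension $k-1=1$.

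Next I would upgrade this local computation to the global complexity via Equation \ref{Eq: complexity}, which says the complexity of an $\F \sym{kp}$-module is the maximum of the complexities of its restrictions to a set $\mathscr{E}$ of representatives of the maximal elementary abelian $p$-subgroups. The key structural input, already recorded in \S\ref{SS: sym}, is that when $p$ is odd the group $E_k$ is, up to conjugation, the unique elementary abelian $p$-subgroup of $\sym{kp}$ of rank $k$, and every other elementary abelian $p$-subgroup has rank strictly less than $k$. Since for any elementary abelian $p$-group $E'$ of rank $r$ the rank variety $V^\#_{E'}(M)$ sits inside $\A^r(\F)$ and therefore has dimension at most $r$, every $E'\in\mathscr{E}$ with $E'\not\sim E_k$ contributes complexity at most $k-1$. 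Combined with the exact value $k-1$ attained at $E_k$, this yields $c_{\sym{kp}}(D(p-1))=k-1$.

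Finally, for the twisted module $D(kp-p-1)$ I would simply invoke Corollary \ref{C: variety kp-p-1}, which gives $V^\#_{E_k}(D(kp-p-1))=V(p_k)$, and then repeat the previous paragraph verbatim (noting that the rank stratification of maximal elementary abelian $p$-subgroups of $\sym{kp}$ is independent of the module). There is no real obstacle in this proof; everything reduces to the rank variety computation of Theorem \ref{T: main thm} and the standard dictionary between complexity, rank variety dimension, and the classification of maximal elementary abelian $p$-subgroups of $\sym{kp}$.
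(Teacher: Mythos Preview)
Your proposal is correct and follows essentially the same argument as the paper's proof: use Theorem \ref{T: main thm} (and Corollary \ref{C: variety kp-p-1} for $D(kp-p-1)$) together with Theorem \ref{T: basic rank}(ii) to get $c_{E_k}=k-1$, then invoke the fact from \S\ref{SS: sym} that every elementary abelian $p$-subgroup not conjugate to $E_k$ has rank strictly less than $k$, and conclude via Equation \ref{Eq: complexity}. Your treatment is slightly more explicit in separating the $k=2$ and $k\geq 3$ cases for $\dim V(p_k)$, but the logic is identical.
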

\begin{proof} Since $p\geq 3$, the $p$-rank of an elementary abelian $p$-subgroup $E$ of $\sym{kp}$ is strictly less than $k$ unless $E$ is conjugate to $E_k$. Since the dimension of $V^\#_{E_k}(D(p-1))$ is $k-1$ and the rest are not more than $k-1$, the maximal value must be $k-1$. By Equation \ref{Eq: complexity} and Theorem \ref{T: basic rank}(ii), the complexity of $D(p-1)$ must be $k-1$. The same holds for $D(kp-p-1)$ using Corollary \ref{C: variety kp-p-1}.
\end{proof}

\begin{cor} Let $\lambda_1,\ldots,\lambda_{p-1}$ be the $(p-1)$th roots of $-1$ in $\F$. When $k=2$, the $\F\sym{2p}$-module $D(p-1)$ restricted to $E_2$ decomposes into $Q\oplus\bigoplus_{j=1}^{p-1} N_j$ such that $Q$ is projective, for each $1\leq j\leq p-1$, $N_j$ is projective-free and $V^\#_{E_2}(N_j)=V(X-\lambda_jY)$.
\end{cor}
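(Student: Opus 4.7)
The plan is to apply Krull--Schmidt to $\res{D(p-1)}{E_2}$ and exploit Theorem~\ref{T: basic rank}(iv) to pin each indecomposable non-projective summand to exactly one of the $p-1$ lines $V(X-\lambda_j Y)$. The starting observation is that Theorem~\ref{T: main thm} applies (since $k=2\not\equiv 1\pmod p$ for $p\geq 3$), giving
\[V^\#_{E_2}(D(p-1))=V(p_2)=\bigcup_{j=1}^{p-1} V(X-\lambda_j Y);\]
these $p-1$ lines are pairwise distinct, so the corresponding projectivization $\overline{V(p_2)}$ is a discrete set of $p-1$ points.

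First I would write $\res{D(p-1)}{E_2}=Q\oplus\bigoplus_i M_i$ with $Q$ the maximal projective summand and each $M_i$ indecomposable and non-projective. For each $M_i$, Theorem~\ref{T: basic rank}(iv) gives that $\overline{V^\#_{E_2}(M_i)}$ is connected, and Lemma~\ref{L: Dade} forces it to be non-empty. A non-empty connected closed subset of a finite discrete set of points is a single point, so $V^\#_{E_2}(M_i)$ is contained in exactly one line $V(X-\lambda_j Y)$. Being a closed, non-zero, homogeneous subvariety of the $1$-dimensional irreducible variety $V(X-\lambda_j Y)$, it must equal $V(X-\lambda_j Y)$ by Theorem~\ref{T: irred variety dim}.

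Next I would group the summands by line: set $N_j$ to be the direct sum of all $M_i$ with $V^\#_{E_2}(M_i)=V(X-\lambda_j Y)$. By construction every $N_j$ is projective-free, and Theorem~\ref{T: basic rank}(iii) yields $V^\#_{E_2}(N_j)=V(X-\lambda_j Y)$ as soon as $N_j\neq 0$. The one point that still needs checking is that each $N_j$ is non-zero. If instead $N_j=0$ for some $j$, then Theorem~\ref{T: basic rank}(iii) applied to the decomposition $Q\oplus\bigoplus_{j'\neq j}N_{j'}$ would give $V^\#_{E_2}(D(p-1))=\bigcup_{j'\neq j}V(X-\lambda_{j'}Y)$. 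But the line $V(X-\lambda_j Y)$ is not contained in any other line through the origin, contradicting Theorem~\ref{T: main thm}.

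The mildly delicate step is this final non-vanishing check; everything else follows immediately from results already established. I do not anticipate any real obstacle. (One could alternatively obtain non-vanishing of each $N_j$ via the transitive $\F_p^\times\wr\sym{2}$-symmetry of Lemma~\ref{L: symmetry} permuting the lines, but the irreducible-components comparison above is more economical.)
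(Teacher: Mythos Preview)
Your proposal is correct and follows essentially the same route as the paper: both use Theorem~\ref{T: main thm} to identify the projective variety as $p-1$ isolated points and then invoke Theorem~\ref{T: basic rank}(iv) to obtain the decomposition by connected components. The paper's proof is a two-sentence sketch citing exactly these two results, whereas you have spelled out the standard Krull--Schmidt/grouping argument (including the non-vanishing of each $N_j$) that underlies such an appeal; nothing in your write-up deviates from or exceeds what the paper intends.
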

\begin{proof} By Theorem \ref{T: main thm}, when $k=2$, the connected components of the projective variety $\overline{V^\#_{E_2}(D(p-1))}$ are singleton points $(\lambda_j:1)$ one for each $1\leq j\leq p-1$. As such, by Theorem \ref{T: basic rank}(iv), there are summands $Q,N_1,\ldots,N_{p-1}$ of $\res{D(p-1)}{E_2}$ with the desired property.
\end{proof}

\begin{rem} By \cite{Danz07,DG15}, it is known that the Green vertices of $D(p-1)$ are the Sylow $p$-subgroups of $\sym{kp}$. In the case of $k\not\equiv 1(\mod p)$, Lemma \ref{irred}, Theorems \ref{T: Green} and \ref{T: main thm} confirm that $D(p-1)$ has a Green vertex containing $E_k$ as $V(p_k)$ is not contained in the union of the proper base subspaces. Moreover, the same holds for $D(kp-p-1)$.
\end{rem}


\end{document}